\pgfplotsset{width=7.5cm,compat=1.9}
\DeclareMathOperator{\Aut}{Aut}
\DeclareMathOperator{\Rep}{Rep}
\DeclareMathOperator{\Diag}{Diag}
\DeclareMathOperator{\PSL}{PSL}
\DeclareMathOperator{\SL}{SL}
\DeclareMathOperator{\PGU}{PGU}
\DeclareMathOperator{\pg}{PG}
\DeclareMathOperator{\Sp}{Sp}
\DeclareMathOperator{\Sz}{Sz}
\DeclareMathOperator{\N}{N}
\DeclareMathOperator{\AGaL}{A\Gamma L}
\DeclareMathOperator{\AGL}{AGL}
\DeclareMathOperator{\AG}{AG}
\DeclareMathOperator{\PG}{PG}
\DeclareMathOperator{\GL}{GL}
\DeclareMathOperator{\GaL}{\Gamma L}
\DeclareMathOperator{\Sym}{Sym}
\DeclareMathOperator{\supp}{supp}
\DeclareMathOperator{\alt}{A}
\renewcommand{\b}{\mathbf}
\renewcommand{\leq}{\leqslant}
\renewcommand{\geq}{\geqslant}
\newcommand{\Q}{\mathcal Q}
\newcommand{\Gtp}{{G_{{\b 0},i,j}}}
\newcommand{\QQ}{{\Q_i^\times\times \Q_j^\times}}
\newcommand{\Gtpta}{{G_{{\b 0},i,j}^{\QQ}}}
\newcommand{\Gp}{{G_{{\b 0},i}}}
\newcommand{\Gpi}{{G_{{\b 0},i}^{\Q_i^\times}}}
\newcommand{\Gtpi}{{G_{{\b 0},i,j}^{\Q_i^\times}}}
\newcommand{\Ki}{{K_{\b 0}^{\Q_i^\times}}}
\newcommand{\Gpkeri}{{G_{{\b 0},(\Q_i)}}}
\newcommand{\Gtpkeri}{{G_{{\b 0},(\Q_i),j}}}
\newcommand{\Gtpj}{G_{{\b 0},i,j}^{\Q_j^\times}}
\newcommand{\Gtpker}{G_{{\b 0},(\Q_i),(\Q_j)}}
\newcommand{\Kij}{K_{{\b 0}}^{\Q_i^\times\times \Q_j^\times}}
\newcommand{\C}{\mathcal C}
\newcommand{\F}{\mathbb F}
\newcommand{\D}{\mathcal D}
\newcommand{\nsub}{\vartriangleleft}
\renewcommand{\N}{\mathcal N}
\renewcommand{\S}{\mathcal S}
\newcommand{\RM}{{\mathcal{RM}}}
\newcommand{\PRM}{{\mathcal{PRM}}}
\newcommand{\Z}{\mathbb Z}
\theoremstyle{plain}
\newtheorem{lemma}{Lemma}
\newtheorem{theorem}[lemma]{Theorem}
\newtheorem{proposition}[lemma]{Proposition}
\theoremstyle{definition}
\newtheorem{definition}[lemma]{Definition}
\newtheorem{problem}[lemma]{Problem}
\newtheorem{remark}[lemma]{Remark}
\numberwithin{equation}{section}
\numberwithin{lemma}{section}
\begin{document}

\title{Alphabet-affine $2$-neighbour-transitive codes}
\author{Daniel R. Hawtin\footnote{The author has been supported by the Croatian Science Foundation under project number 4571.\\
Address: Faculty of Mathematics, University of Rijeka. Rijeka 51000. Croatia. Email: \href{mailto:dhawtin@math.uniri.hr}{dhawtin@math.uniri.hr}}}

\maketitle
\begin{abstract}
 A \emph{code} $\C$ is a subset of the vertex set of a Hamming graph $H(n,q)$, and $\C$ is \emph{$2$-neighbour-transitive} if the automorphism group $G=\Aut(\C)$ acts transitively on each of the sets $\C$, $\C_1$ and $\C_2$, where $\C_1$ and $\C_2$ are the (non-empty) sets of vertices that are distances $1$ and $2$, respectively, (but no closer) to some element of $\C$. 
 
 Suppose that $\C$ is a $2$-neighbour-transitive code with minimum distance at least $5$. For $q=2$, all `minimal' such $\C$ have been classified. Moreover, it has previously been shown that a subgroup of the automorphism group of the code induces an affine $2$-transitive group action on the alphabet of the Hamming graph. The main results of this paper are to show that this affine $2$-transitive group must be a subgroup of $\AGaL_1(q)$ and to provide a number of infinite families of examples of such codes. These examples are described via polynomial algebras related to representations of certain classical groups.
\end{abstract}

\section{Introduction}

There is a rich history of the study of symmetry in error-correcting codes in Hamming graphs, from perfect codes \cite{tietavainen1973nonexistence,zinoviev1973nonexistence}, to uniformly packed codes \cite{SemZinZai71} and completely regular codes \cite{delsarte1973algebraic}. For a survey on completely regular codes see \cite{borges2019completely}. Complete transitivity, the algebraic analogue of complete regularity, was introduced for binary linear codes in \cite{sole1990completely} and more generally in \cite{giudici1999completely}. This paper is concerned with $2$-neighbour-transitivity, a relaxation of complete transitivity; both concepts are defined formally below. To briefly introduce the notation, if $\C$ is a code in the Hamming graph $H(n,q)$ then $\C_i$ is the set of vertices of $H(n,q)$ that are distance $i$ from some element of $\C$, but not distance $j$ from any element of $\C$ for any $j<i$. Also, the \emph{covering radius} $\rho$ of $\C$ is the largest value of $i$ for which $\C_i$ is non-empty, the \emph{minimum distance} of $\C$ is the smallest distance between distinct elements of $\C$, and $\Aut(\C)$ is the automorphism group of $\C$ (see Section~\ref{sec:prelim}).

\begin{definition}\label{def:sneighbourtrans}
 Let $\C$ be a code with covering radius $\rho$ in the Hamming graph $\Gamma=H(n,q)$, let $G\leq \Aut(\C)$, and let $s\in\{1,\ldots,\rho\}$. Then we have the following definitions:
 \begin{enumerate}[(1)]
  \item $\C$ is \emph{$(G,s)$-neighbour-transitive} if $G$ acts transitively on each of the sets $\C,\C_1,\ldots, \C_s$.
  \item $\C$ is \emph{$G$-neighbour-transitive} if $\C$ is $(G,1)$-neighbour-transitive.
  \item $\C$ is \emph{$G$-completely transitive} if $\C$ is $(G,\rho)$-neighbour-transitive.
 \end{enumerate}
 Moreover, we say that $\C$ is \emph{neighbour-transitive}, \emph{$s$-neighbour-transitive}, or \emph{completely transitive}, respectively, if $\C$ is $\Aut(\C)$-neighbour-transitive, $(\Aut(\C),s)$-neighbour-transitive, or $\Aut(\C)$-completely transitive, respectively.
\end{definition}

Binary $2$-neighbour-transitive codes with minimum distance at least $5$ in $H(n,2)$ have been characterised via their minimal subcodes, see \cite{ef2nt,smallblocks,minimal2nt}. In particular, if $|\C|>2$ then either $\C$ is one of three non-linear codes, or $\C$ contains a known linear code, the possibilities for which are determined in \cite{minimal2nt}. These results have been used for a partial classification of binary completely transitive codes with minimum distance at least $5$ in $H(n,2)$; see \cite{bailey2023classification}. In the case where $q>2$, less is known. By \cite[Proposition~2.7]{ef2nt}, the automorphism group of $2$-neighbour-transitive code with minimum distance at least $5$ gives rise to a $2$-transitive action on the alphabet. It is known that every $2$-transitive group is either affine or almost-simple (see \cite[Section 154]{burnside1911theory} or \cite[Theorem 3.21]{PS2018}). In the case that the action on the alphabet is almost-simple, \cite[Theorem~1.1]{gillespie2017alphabet} proves that there are no $2$-neighbour-transitive codes with minimum distance at least $5$ in $H(n,q)$. This leads us to consider the case where the action on the alphabet is affine.

The first main result, stated below, provides information on the structure of the automorphism group of a $2$-neighbour-transitive code with minimum distance at least $5$ in $H(n,q)$. In particular, Theorem~\ref{thm:2ntCodes}(1) shows that the action on the alphabet for such a code is a subgroup of a $1$-dimensional affine semi-linear group. Here the vertices of $H(n,q)$ are $\N$-tuples over the alphabet $\Q$, with the set $\Q_i$ being the copy of $\Q$ in the $i$-th coordinate, where $i\in\N$. We assume that $0\in\Q$ and denote the set of non-zero elements of $\Q$ by $\Q^\times$. The group $G$ is a subgroup of the automorphism group of $H(n,q)$, and $G_{(\N)}$ is the subgroup of $G$ fixing $\N$ point-wise, that is, $G_{(\N)}$ is the kernel of the action of $G$ on $\N$. If $H\leq \Sym(\Q)$ then $\Diag_n(H)$ is the group consisting of all $n$-tuples $(h,\ldots,h)$, where $h\in H$, acting identically in each coordinate of the Hamming graph. We say that a code $\C$ is \emph{non-trivial} if $|\C|\notin \{0,1,q^n\}$, and a permutation group is \emph{semi-regular} if its point-stabiliser is trivial.

\begin{theorem}\label{thm:k0solubleandsemireg}
 Suppose $\C$ is a non-trivial $(G,2)$-neighbour-transitive code with minimum distance $\delta\geq 5$ in $H(n,q)=H(\N,\Q)$, let $K=G_{(\N)}$, let ${\b 0}\in\C$, and let $i\in \N$. Then the following hold:
 \begin{enumerate}[$(1)$]
  \item $G_i^{\Q_i}$ a $2$-transitive subgroup of $\AGaL_1(q)$.
  \item $K_{\b 0}\cong \Diag_n(H)$, where $H\leq\Sym(\Q)_0$ and $H$ acts semi-regularly on $\Q^\times$.
 \end{enumerate}
\end{theorem}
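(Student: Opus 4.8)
The plan is to extract the local combinatorial structure forced by $(G,2)$-neighbour-transitivity together with $\delta\geq 5$, and then to prove the statement in the order: the diagonal part of (2), then (1), then semiregularity. The organising principle is that the normal subgroup $K=G_{(\N)}$ ties the alphabet action to the coordinate structure. For the reductions, note that since $\delta\geq 5$ every vertex of $\C_1$ (respectively $\C_2$) lies at distance $1$ (respectively $2$) from a \emph{unique} codeword, so ``nearest codeword'' is a $G$-invariant surjection $\C_s\to\C$ for $s\in\{1,2\}$. Transitivity of $G$ on $\C$ and on $\C_s$ then forces $G_{\b 0}$ to be transitive on the fibre over $\b 0$, i.e.\ on the distance-$1$ and distance-$2$ neighbours of $\b 0$. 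Identifying the distance-$1$ neighbours of $\b 0$ with pairs $(i,\alpha)$ ($i\in\N$, $\alpha\in\Q^\times$) and the distance-$2$ neighbours with pairs of distinct coordinates carrying nonzero values, I would deduce: $G$ is transitive and in fact $2$-homogeneous on $\N$; the group $G_{{\b 0},i}^{\Q_i}$ is transitive on $\Q^\times$; and the stabiliser $\Gtp$ of $\b 0$ and of two coordinates $i\neq j$ induces on $\Q_i^\times\times\Q_j^\times$ a group transitive on this grid.

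For the diagonal claim in (2), I would take $g=(h_1,\dots,h_n)\in K_{\b 0}\leq\Sym(\Q)_0^{n}$ and first exploit $2$-homogeneity: conjugating $g$ by elements of $G_{\b 0}$ that permute coordinates, and using $K\trianglelefteq G$, shows that the $h_i$ are pairwise conjugate in $\Sym(\Q)_0$. I would then upgrade ``conjugate'' to ``equal'' using the grid transitivity of $\Gtp$ together with $\delta\geq5$: if the actions in two coordinates differed, a suitable distance-$2$ neighbour of $\b 0$ would be mapped to a vertex whose unique nearest codeword is incompatible with $\b 0$. This yields $K_{\b 0}\cong\Diag_n(H)$ with $H=K_{\b 0}^{\Q_i}\leq\Sym(\Q)_0$.

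For part (1), \cite[Proposition~2.7]{ef2nt} gives that $G_i^{\Q_i}$ is $2$-transitive, and \cite[Theorem~1.1]{gillespie2017alphabet} excludes the almost-simple type, so $G_i^{\Q_i}=\Q\rtimes G_0$ is affine with $q=p^d$ and $G_0\leq\GL_d(p)$ transitive on $\Q^\times$. To force $G_0\leq\GaL_1(q)$ I would invoke Hering's classification of transitive linear groups and eliminate every class other than the $1$-dimensional semilinear one. The key subtlety is that grid transitivity alone does not suffice: since $\SL_a(p^b)$ (and $\Sp_a(p^b)$) is already transitive on nonzero vectors, a subdirect product of two such groups can be transitive on $\Q_i^\times\times\Q_j^\times$. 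The elimination must therefore combine the Goursat structure of the image of $\Gtp$ in $G_0\times G_0$ with the finer constraints from $K$ and the diagonal group $H$ (noting that $K^{\Q_i}\trianglelefteq G_i^{\Q_i}$ is, by primitivity, either trivial or transitive), ruling out the remaining classical and sporadic classes by an order- and module-theoretic analysis. This case elimination is the main obstacle.

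Finally, for semiregularity in (2) I would show that no nontrivial diagonal element $g=(h,\dots,h)\in K_{\b 0}$ can satisfy $h(\alpha)=\alpha$ for some $\alpha\in\Q^\times$: such a $g$ fixes the neighbour $(0,\dots,\alpha,\dots,0)$ in every coordinate, and feeding this into the $G_{\b 0}$-action on $\C_1$ and $\C_2$ under $\delta\geq5$ (where $K_{\b 0}$ is already faithful on $\C_1$) should force $g$ to fix so much of the configuration that $h$ is trivial, whence $H$ acts semiregularly on $\Q^\times$. I expect the two principal difficulties to be the Hering case analysis underlying (1) and making rigorous both the ``conjugate $\Rightarrow$ equal'' upgrade in the diagonal claim and the fixed-point-free conclusion for $H$.
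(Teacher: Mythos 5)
Your skeleton matches the paper's in outline (reduce to the alphabet-affine case via the cited classifications; Hering's theorem plus case elimination for (1); local analysis of $K_{\b 0}$ for (2)), but both technical cores are left with genuine gaps. For part (2), the mechanism you propose for the ``conjugate $\Rightarrow$ equal'' upgrade and for semiregularity does not work: an element of $K_{\b 0}$ fixes $\b 0$ and every coordinate, so it permutes $\Gamma_1({\b 0})$ and $\Gamma_2({\b 0})$ among themselves, and the unique nearest codeword of the image of a distance-$1$ or distance-$2$ neighbour of $\b 0$ is still $\b 0$ --- no incompatibility can arise from these vertices alone. The paper's Lemma~\ref{lem:semireg} instead runs everything through Lemma~\ref{lem:design}: the weight-$\delta$ codewords form a $q$-ary $2$-design, so for any $a,a'\in\Q_i^\times$ and $b\in\Q_j^\times$ there are weight-$\delta$ codewords $\alpha,\beta$ with $\alpha_i=a$, $\beta_i=a'$ and $\alpha_j=\beta_j=b$; if $h\in K_{\b 0}$ fixes $a$ in coordinate $i$, then $\supp(\alpha^h)=\supp(\alpha)$ gives $d(\alpha,\alpha^h)<\delta$, hence $\alpha^h=\alpha$, and the fixing propagates through the design to force $h=1$. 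That single argument yields injectivity of the projection $K_{\b 0}\to K_{\b 0}^{\Q_i^\times}$ (hence the diagonal structure, up to conjugation by an element of $\Aut(\Gamma)_{\b 0}$) and semiregularity simultaneously; without invoking the design property of minimum-weight codewords your two steps have no engine.

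For part (1) you correctly identify Hering's classification and the insufficiency of grid transitivity, but you stop where the work begins and you miss the paper's key structural input: Proposition~\ref{orbitof0is2ntmodule} supplies a subcode $\S$ that is an $\F_p G_{\b 0}$-module, so $\Diag_n(\F_p^\times)\leq L_{\b 0}$ where $L$ is the kernel for $\Aut(\S)$; combined with semiregularity this restricts $H$ to four possibilities (Table~\ref{tab:tranLinearSemireg}) and places $X_{{\b 0},i,j}^{\QQ}$ inside $N_{A\times A}(D)$ with $A=N_{\GL_d(p)}(H)$, whose orbits on $\QQ$ can be computed and tested against Lemma~\ref{lem:xtpitrans}(2). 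Note also that only ``at most two orbits of equal size'' on $\QQ$ is available, not the full grid transitivity you assert, since $G_{{\b 0},\{i,j\}}$ may interchange $i$ and $j$; this weaker statement is exactly what the elimination uses. The residual cases (Lemma~\ref{lem:BinaryTrivialK0}) still need Zsigmondy's theorem and explicit order computations, none of which appear in the proposal.
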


Note that Theorem~\ref{thm:2ntCodes}(1) implies that we may assume that the alphabet is $\F_q$ when discussing non-trivial $2$-neighbour-transitive codes with minimum distance at least $5$ in $H(n,q)$. However, this does not imply that such codes are linear; we discuss this further in the remark below. 

\begin{remark}\label{rem:nonLinear}
 Note that if $\C$ is a $G$-neighbour-transitive code with minimum distance $\delta\geq 3$ in $H(\N,\F_q)$, then \cite[Proposition~2.5]{ef2nt} implies that $G$ acts transitively on $\N$, and hence $G_i\cong G_j$ for all $i,j\in\N$. If we also assume that $\C$ is linear, that is, $\C$ is an $\F_q$-subspace of the vertex set of $H(\N,\F_q)$, then it follows that $\AGL_1(q)\leq G_i^{\Q_i}\leq \AGaL_1(q)$ and $K_{\b 0}^{\Q_i}=\Diag_n(\F_q^\times)$. There are several ways a code $\C$ satisfying Theorem~\ref{thm:k0solubleandsemireg} may be non-linear. First, $\C$ could be a union of cosets of a linear code, as is the case for the Nordstrom--Robinson codes of lengths $15$ and $16$, see \cite{semakov1969complete} or \cite{gillespie2017new}. It is also possible that $K_{\b 0}^{\Q_i}$ is a proper subgroup of $\GL_1(q)$, or that $K_{\b 0}^{\Q_i}$ is a subgroup of $\GaL_1(q)$ but is not contained in $\GL_1(q)$. An example of a regular group of the latter type is $\langle \omega^2,\omega\tau^2\rangle$, where $\omega$ is a generator of $\F_{3^4}^\times$ and $\tau$ is the Frobenius automorphism of $\F_{3^4}$ over $\F_3$ (see \cite[Lemmas~4.4 and~4.6]{li2009homogeneous}). The author is unaware of any examples of $2$-neighbour-transitive codes where $K_{\b 0}^{\Q_i}$ is strictly contained in $\GL_1(q)$, or where $K_{\b 0}^{\Q_i}$ is not a subgroup of $\GL_1(q)$.
\end{remark}

The next definition is required in order to state the second main result of the paper, Theorem~\ref{thm:2ntCodes}, which describes several infinite families of $2$-neighbour-transitive codes. Note that $N(a)=a^{(q^s-1)/(q-1)}$ is the norm of $a\in\F_{q^s}$, considered as an extension of $\F_q$. Also, the degree of a monomial $x_1^{a_1}\cdots x_t^{a_t}$ is $a_1+\cdots+a_t$, and the degree of a polynomial is the maximum value of the degrees of its constituent monomials. 

\begin{definition}\label{def:polynomials}
 Define $R(q,s,t,k)$ to be the set of all polynomials $f$ in $\F_{q^s}[x_1,\ldots,x_t]$ such that $f(a_1,\ldots,a_t)\in \F_q$ for all $(a_1,\ldots,a_t)\in\F_{q^s}^t$ and $f(ax_1,\ldots,ax_t)=N(a)^k f(x_1,\ldots,x_t)$, for each $a\in\F_{q^s}$. Note that the latter condition is equivalent to requiring that every monomial of $f$ has degree $k(q^s-1)/(q-1)$ modulo $q^s-1$.
\end{definition}

The relation of the above definition with codes in Hamming graphs comes from representing a vertex of $H(\N,\F_q)$ as a function $\N\rightarrow\F_q$. In particular, if $\N\subseteq\F_{q^s}^t$ and $f\in R(q,s,t,k)$ then $f$ represents a vertex of $H(\N,\F_q)$. Lemma~\ref{lem:polyHammingBij} treats this connection more concretely, and proves that $R(q,s,t,k)$ is an $\F_q \GL_t(q^s)$-module. The next theorem presents several infinite families of non-trivial $2$-neighbour-transitive codes; see Remark~\ref{rem:submoduleStr} for a discussion concerning the existence of the relevant $\F_q G_{\b 0}$-submodules. 

\noindent
\begin{table}[ht]
\begin{center}
 \begin{tabular}{cccc}
  $G_{\b 0}$ & $\N$ & $n$ & conditions\\
  \hline
  $\GL_t(q^s)/\Z_{(q^s-1)/(q-1)}$ & $\PG_{t-1}(q^s)$ & $\frac{q^{st}-1}{q^s-1}$ & $t\geq 2$\\
  $\F_q^\times\rtimes \AGL_{t-1}(q^s)$ & $\AG_{t-1}(q^s)$ & $q^{s(t-1)}$ & $t\geq 2$\\
  $\F_q^\times\rtimes \PGU_3(q^{s/2})$ & Classical unital & $q^{3s/2}+1$ & 
 $q=2^e$, $t=3$, $s$ even\\
  $\F_q^\times\rtimes \Sz(q^s)$ & Suzuki--Tits ovoid & $q^{2s}+1$ & 
 $q=2^{2e+1}$, $t=4$\\
  \hline
 \end{tabular}
 \caption{Some groups $G_{\b 0}$ acting transitively on the sets $\Gamma_1({\b 0})$ and $\Gamma_2({\b 0})$ of vertices in $H(\N,\F_q)$, where $\N$ is a set of representatives for the $1$-dimensional subspaces of the indicated subset of points of $\pg_{t-1}(q^s)$. See Theorem~\ref{thm:2ntCodes} and Section~\ref{sec:polyCodes} for more details.}
 \label{tab:2ntGroups}
\end{center}
\end{table}

\begin{theorem}\label{thm:2ntCodes}
 Let $q$, $s$, $t$, $G_{\b 0}$ and $\N$ be as in one of the lines of Table~\ref{tab:2ntGroups}, let $k\in\{1,2,\ldots,q-1\}$ with $\gcd(k,q-1)=1$, let $\C$ be an $\F_q G_{\b 0}$-submodule of $R(q,s,t,k)$ such that $\C$ is a nontrivial code with minimum distance $\delta$ and covering radius $\rho$ in $H(\N,\F_q)$, and let $G=T_{\C}\rtimes G_{\b 0}$. Then one of the following holds.
 \begin{enumerate}[$(1)$]
  \item $\rho=1$, $\C$ is $G$-completely transitive and either:
  \begin{enumerate}[$(i)$]
   \item $q=2$, $\delta=2$ and $\C$ is the dual of the binary repetition code, or,
   \item $\delta=3$ and $\C$ is a perfect Hamming code, that is, $G_{\b 0}$ and $\N$ are as in line 1 of Table~\ref{tab:2ntGroups}, with $s=1$ and $t=q-2$, and $\C$ is the submodule of $R(q,1,t,q-2)$ consisting of all polynomials having degree at most $\ell=(t-1)(q-1)-1$.
  \end{enumerate} 
  \item $\delta\geq 4$, $\rho\geq 2$ and $\C$ is $(G,2)$-neighbour-transitive.
 \end{enumerate}
\end{theorem}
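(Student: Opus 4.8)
The plan is to combine the linear (translation) structure of $\C$ with the sphere-transitivity of $G_{\b 0}$ recorded in Table~\ref{tab:2ntGroups}, reducing everything to a case analysis on the minimum distance $\delta$. Since $\C$ is an $\F_q G_{\b 0}$-submodule of $R(q,s,t,k)$ it is an $\F_q$-subspace of the vertex set, so the group $T_{\C}$ of translations by codewords acts regularly on $\C$, and $G_{\b 0}$ normalises $T_{\C}$ because it maps codewords to codewords; hence $G=T_{\C}\rtimes G_{\b 0}\leq\Aut(\C)$ and $G$ is already transitive on $\C$. The key reduction is a homogeneity argument: the distance $d(v,\C)$ is constant on $G$-orbits and $T_{\C}$ is transitive on $\C$, so translating a nearest codeword to $\b 0$ shows that every vertex of $\C_i$ is $T_{\C}$-equivalent to a vertex of $\Gamma_i({\b 0})\cap\C_i$; consequently $G$ is transitive on $\C_i$ as soon as $G_{\b 0}$ is transitive on $\Gamma_i({\b 0})\cap\C_i$. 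It therefore suffices to compare the spheres $\Gamma_1({\b 0})$ and $\Gamma_2({\b 0})$ with $\C_1$ and $\C_2$.

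First I would record the distance comparisons. If $\delta\geq 2$ then no weight-$1$ vector is a codeword, so $\Gamma_1({\b 0})\subseteq\C_1$; and if $\delta\geq 4$ then a weight-$2$ vector cannot lie within distance $1$ of a nonzero codeword (such a codeword would have weight at most $3$), so $\Gamma_2({\b 0})\subseteq\C_2$. In both cases the sphere is a single $G_{\b 0}$-orbit by Table~\ref{tab:2ntGroups}, so the homogeneity reduction yields transitivity of $G$ on $\C_1$ and on $\C_2$. This already gives conclusion~(2): when $\delta\geq 4$ the set $\C_2\supseteq\Gamma_2({\b 0})$ is non-empty, so $\rho\geq 2$ and $\C$ is $(G,2)$-neighbour-transitive.

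It remains to treat $\delta\in\{1,2,3\}$, using that $\C$, $\C_1$ and $\C_2$ are $G_{\b 0}$-invariant while $\Gamma_1({\b 0})$ and $\Gamma_2({\b 0})$ are single $G_{\b 0}$-orbits. The case $\delta=1$ is impossible, since a weight-$1$ codeword forces $\Gamma_1({\b 0})\subseteq\C$ and hence $\C=\F_q^n$, contradicting non-triviality. If $\delta=2$ then a weight-$2$ codeword forces $\Gamma_2({\b 0})\subseteq\C$; for $q>2$ differences of such codewords have weight $1$, again giving $\C=\F_q^n$, so $q=2$ and $\C$ is the even-weight code, which is perfect with $\rho=1$ -- this is~(1)(i). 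If $\delta=3$, deleting one nonzero entry of a weight-$3$ codeword produces a weight-$2$ vector at distance exactly $1$ from $\C$, so $\Gamma_2({\b 0})\cap\C_1\neq\emptyset$ and hence $\Gamma_2({\b 0})\subseteq\C_1$; by homogeneity this forces $\C_2=\emptyset$, so $\rho=1$ and $\C$ is a perfect code of minimum distance $3$, that is, a Hamming code. In both non-trivial subcases $\delta\geq 2$ gives $\Gamma_1({\b 0})\subseteq\C_1$, so $G$ is transitive on $\C_1$; as $\rho=1$ this shows $\C$ is $G$-completely transitive, matching~(1).

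The main obstacle will be the precise identification in~(1)(ii). Having shown that $\delta=3$ produces a perfect Hamming code, I would match its length $n$ against the Hamming length $(q^m-1)/(q-1)$: the lengths $q^{s(t-1)}$, $q^{3s/2}+1$ and $q^{2s}+1$ of lines $2$--$4$ are never of this form, and line~$1$ gives $(q^{st}-1)/(q^s-1)$, which forces $s=1$ and $m=t$. I would then perform a dimension count on the graded pieces of $R(q,1,t,k)$, viewed as an $\F_q\GL_t(q)$-module via Lemma~\ref{lem:polyHammingBij}, to pin down $k$, $t=q-2$ and the degree bound $\ell=(t-1)(q-1)-1$ so that $\dim_{\F_q}\C=n-t$ equals the dimension of the $q$-ary Hamming code. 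The inputs that $T_{\C}\rtimes G_{\b 0}\leq\Aut(\C)$ and that $\Gamma_1({\b 0}),\Gamma_2({\b 0})$ are $G_{\b 0}$-orbits are supplied by the module structure of $R(q,s,t,k)$ and the constructions of Section~\ref{sec:polyCodes} summarised in Table~\ref{tab:2ntGroups}.
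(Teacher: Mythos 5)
Your proposal is correct and follows essentially the same route as the paper: the paper's proof is precisely the combination of $T_\C$-transitivity on $\C$, the transitivity of $G_{\b 0}$ on $\Gamma_1({\b 0})$ and $\Gamma_2({\b 0})$ (Proposition~\ref{prop:2ntCodes}, which is what your appeal to Table~\ref{tab:2ntGroups} amounts to), and the general trichotomy of Proposition~\ref{prop:coveringRadius1} plus Remark~\ref{rem:perfectHamming}, whose content your inline case analysis on $\delta$ reproduces almost verbatim. The only caveats are minor: your length/dimension count pinning down the identification in (1)(ii) is left as a sketch (though the paper's own proof is no more explicit there, deferring to the parity-check-matrix argument of Remark~\ref{rem:perfectHamming}), and the dual of the binary repetition code in case (1)(i) is not a perfect code, though nothing in your argument depends on that aside.
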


\begin{remark}\label{rem:submoduleStr}
 Below we discuss some considerations concerning the existence of codes satisfying the above theorem, and their representation.
 \begin{enumerate}[(1)]
  \item Proposition~\ref{prop:2ntCodesMinDist} determines the minimum distances of certain codes under Theorem~\ref{thm:2ntCodes}(2) and lines 2--4 of Table~\ref{tab:2ntGroups}. In particular, this confirms that there are infinitely many $2$-neighbour-transitive codes arising in these cases.
  
  \item Note that, as per the present Definition~\ref{def:polynomials}, the space $R(q,s,t,k)$ is infinite-dimensional, unlike the vertex set of $H(\N,\F_q)$. Lemma~\ref{lem:polyHammingBij} remedies this by considering $\F_{q^s}[x_1,\ldots,x_t]/I$, where $I$ is the ideal generated by the set of all polynomials vanishing on $\bigcup_{v\in\N}\langle v\rangle_{\F_{q^s}}$. Indeed, this approach is fairly standard when studying polynomial-evaluation or algebraic-geometric codes; see, for example, \cite{gonzalez2024indicator,vladut1984linear}.
  
  \item If $\N$ is a set of representatives for the point-set of $\pg_{t-1}(q)$, then the $\F_q\GL_t(q)$-submodule structure of the image $A[k]$ of $R(q,1,t,k)$ in $\F_{q^s}[x_1,\ldots,x_t]/I$ (where $I$ is as in part (2) of this remark) is determined in \cite[Theorem~C]{sin2000permutationmodules}, where the parameter $k$ here is denoted $d$ in \cite{sin2000permutationmodules}. Briefly, if $q=p^r$ then composition factors of $A[k]$ are indexed by $r$-tuples of integers satisfying certain conditions. Furthermore, the lattice of ideals for a partial-order defined on the set of these $r$-tuples is shown to be isomorphic to the submodule lattice of $A[k]$. In this sense, all codes satisfying Theorem~\ref{thm:2ntCodes} with $s=1$, and $G_0$ and $\N$ as in line 1 of Table~\ref{tab:2ntGroups}, are known. See Section~\ref{sec:ReedMuller} for some discussion of the related generalised and projective Reed--Muller codes. 
  
  \item Further, new codes may be produced from the codes discussed in part (3) of this remark by restricting to a subset of $\N$, or by restricting the alphabet to a subfield. More formally, let $\C$ be an $\F_{q^s}\GL_t(q^s)$-submodule of $R(q^s,1,t,k)$, where $k=k'(q^s-1)/(q-1)$ and let $\C'$ be the $\F_{q}\GL_t(q^s)$-submodule consisting of those polynomials $f$ where $f(x_1,\ldots,x_t)\in\F_q$ for all $(x_1,\ldots,x_t)\in\F_{q^s}^t$. Then $\C'$ is an $\F_{q}\GL_t(q^s)$-submodule of $R(q,s,t,k')$ and, letting $s$, $t$, $\N$ be as in one of the lines of Table~\ref{tab:2ntGroups}, $\C'$ is a code in $H(\N,\F_q)$. That is to say, the submodule structure determined in \cite[Theorem~C]{sin2000permutationmodules} may be used to provide examples of codes satisfying Theorem~\ref{tab:2ntGroups}, for each line of Table~\ref{tab:2ntGroups}, for each prime power $q$, and for each integer $s\geq 1$. However, it is also worth noting that $R(q,s,t,k')$ may have a finer $\F_{q} G_{\b 0}$-submodule structure than that given by this process, and this has not been determined in general. 
 \end{enumerate}
\end{remark}

The partial classification results for binary completely transitive codes obtained in \cite{bailey2023classification} rely on knowledge of the maximal and second-maximal non-trivial $2$-neighbour-transitive codes in $H(n,q)$. In particular, it is often useful to know the minimum distance, the covering radius, and the geometry of the low-weight codewords of each code.

\begin{problem}
 Determine the maximal and second-maximal (by inclusion) non-trivial $2$-neighbour-transitive codes in $H(\N,\F_q)$ for each line of Table~\ref{tab:2ntGroups}, as well as bounds on their minimum distances and covering radii, and the geometry of the their low-weight codewords.
\end{problem}

Note that a significantly weaker version of Theorem~\ref{thm:k0solubleandsemireg}(1) was proved in the author's PhD thesis, as \cite[Theorem~8.1(2)]{myphdthesis}. Additionally, some parts of Theorem~\ref{thm:2ntCodes} were also proved in \cite{myphdthesis}; see \cite[Sections~9.1--9.5]{myphdthesis}. As far as the author is aware, the codes as in lines 2--4 of Table~\ref{tab:2ntGroups} under Theorem~\ref{thm:2ntCodes}(3) have not previously been studied.

The paper is organised as follows. The next section covers the notation and preliminary results required in later sections. The proof of Theorem~\ref{thm:k0solubleandsemireg} is developed in Section~\ref{sec:AlphabetAction}. The examples and proof of Theorem~\ref{thm:2ntCodes} are presented in Section~\ref{sec:polyCodes}, and certain Reed--Muller codes related to Theorem~\ref{thm:2ntCodes} are considered in Section~\ref{sec:ReedMuller}.

\section{Preliminaries}\label{sec:prelim}

Let $\N$ be a set of size $n$ and $\Q$ a set of size $q$, where $n,q\geq 2$. Depending on context, we will use the following two equivalent formulations of the Hamming graph, which we denote by $H(\N,\Q)$ or $H(n,q)$. In the first, we identify $\N$ with $\{1,\ldots,n\}$ and represent the vertex set of $H(\N,\Q)$ by the set of all $n$-tuples $(a_1,\ldots,a_n)$, where $a_i\in \Q$ for each $i\in \N$. An edge exists between two such $n$-tuples if and only if they differ in precisely one position. Alternatively, we may represent the vertex set of $H(\N,\Q)$ by the set of all functions $\alpha:\N\rightarrow\Q$, in which case an edge exists between functions $\alpha$ and $\beta$ if and only if there exists a unique $i\in\N$ such that $\alpha(i)\neq\beta(i)$. We refer to the set $\N$ as the \emph{set of entries} or the \emph{coordinates}, and the set $\Q$ as the \emph{alphabet}, of $H(\N,\Q)$. If $0\in\Q$ then the \emph{support} of a vertex $\alpha$ of $H(\N,\Q)$, denoted $\supp(\alpha)$, is the set $\{i\in\N\mid\alpha(i)\neq 0\}$, and the \emph{weight} of $\alpha$ is the size of $\supp(\alpha)$.

Let $\C$ be a code in $H(\N,\Q)$. As stated in the introduction, if $|\C|=0,1$ or $q^n$ then we say that $\C$ is trivial and we generally assume that $\C$ is non-trivial, sometimes without statement. Recall that the elements of $\C$ are called codewords and the minimum distance $\delta$ of $\C$ is the smallest distance in $H(\N,\Q)$ between a pair of distinct codewords, and the covering radius of $\C$ is the largest distance from any vertex of $H(\N,\Q)$ to its nearest codeword. The \emph{error-correction capacity} of $\C$, denoted $e$, is $\lfloor (\delta-1)/2\rfloor$. 

\subsection{Automorphism groups}

Let $G$ be a group acting on a set $\Omega$. Then we write $G^\Omega$ for the (faithful) subgroup of $\Sym(\Omega)$ induced by $G$. If $\alpha\in\Omega$ and $\Delta\subseteq\Omega$, then we write $G_\alpha$ for the stabiliser in $G$ of $\alpha$, we write $G_\Delta$ for the set-wise stabiliser in $G$ of $\Delta$, and we write $G_{(\Delta)}$ for the point-wise stabiliser in $G$ of $\Delta$. Thus $G_{(\Delta)}$ is the kernel for the action of $G_\Delta$ on $\Delta$, and we have that $G^\Omega=G/G_{(\Omega)}$. See \cite{dixon1996permutation} for more on permutation groups.

The full automorphism group of the Hamming graph $\Gamma=H(\N,\Q)$ factorises as the semi-direct product $\Aut(\Gamma)=\Aut(\Gamma)_{(\N)}\rtimes \Aut(\Gamma)^\N$, where $\Aut(\Gamma)_{(\N)}$ is isomorphic to $\Sym(\Q)^n$ and is called the \emph{base group}, and $\Aut(\Gamma)^\N$ is isomorphic to $\Sym(\N)$ and is called the \emph{top group}; see \cite[Theorem 9.2.1]{brouwer}. Let $x=h\sigma\in\Aut(\Gamma)$, where $h=(h_1,\ldots,h_n)\in \Sym(\Q)^n$ and $\sigma\in \Sym(\N)$. If $i'=i^{\sigma^{-1}}$ for each $i\in \N$, then $h, \sigma$ and $x$ act on an $n$-tuple $\alpha=(a_1,\ldots,a_n)$ via
\begin{equation}\label{e:hamaut}
 \alpha^h= \left(a_1^{h_1},\ldots,a_n^{h_n}\right), \quad  
 \alpha^\sigma= \left(a_{1'},\ldots,a_{n'}\right), \quad 
 \text{and}\quad \alpha^x=\left(a_{1'}^{h_{1'}},\ldots,a_{n'}^{h_{n'}}\right).
\end{equation}
For example, $(a_1,a_2,a_3,a_4)^{(1\,2\,3\,)}=(a_3,a_1,a_2,a_4)$. If instead we consider a vertex $\alpha$ to be a function $\N\rightarrow\Q$, then $h, \sigma$ and $x$ act on $\alpha$ via
\begin{equation}\label{e:hamaut2}
\alpha^h(i)=\left(\alpha(i)\right)^{h_i},\quad \alpha^\sigma(i)=\alpha(i'),  \quad \text{and}\quad  \alpha^x(i)=\left( \alpha(i')\right)^{h_{i'}},
\end{equation}
where $i'=i^{\sigma^{-1}}$ for each $i\in \N$. Let $G\leq \Aut(\Gamma)$. If $x=h\sigma\in G$, with $h=(h_1,\ldots,h_n)\in \Sym(\Q)^n$ and $\sigma\in \Sym(\N)$, then the image $G^\N$ of the homomorphism $x\mapsto x^\N=\sigma$ is the \emph{action (of $G$) on entries}, and the image $G_i^{\Q_i}$ of the homomorphism $x\mapsto x^{\Q_i}=h_i$ is the \emph{action (of $G$) on the alphabet in entry $i$}, where in the latter homomorphism we have assumed that $i^\sigma=i$. If $G_i^{\Q_i}\cong G_j^{\Q_j}$ for all $i,j\in\N$ then we simply refer to the \emph{action on the alphabet}. We will often denote the kernel $G_{(\N)}$ of the action of $G$ on $\N$ by $K$. If $\C$ is linear then we denote by $T_\C$ the group of translations by elements of $\C$.

\subsection{\texorpdfstring{$s$}{s}-Neighbour-transitive codes}

The following two propositions are fundamental results in the analysis of $s$-neighbour-transitive codes.

\begin{proposition}\label{prop:HammingiHomogeneous}\cite[Proposition~2.5]{ef2nt}
 Let $\C$ be a $(G,s)$-neighbour-transitive code with error-correction capacity $e\geq 1$ in $H(\N,\Q)$. If $\alpha\in \C$, then $G_\alpha$ acts $i$-homogeneously on $\N$, for each $i\leq\min\{e,s\}$.
\end{proposition}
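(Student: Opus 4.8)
The plan is to fix the codeword $\alpha$ and the integer $i\leq\min\{e,s\}$ once and for all, and to show directly that the entry action $G_\alpha^\N$ is transitive on the $i$-subsets of $\N$; no preliminary reduction is needed, since the conclusion is asserted for each individual codeword. The strategy is to attach to every $i$-subset a ``test vertex'' that lies in $\C_i$ and has $\alpha$ as its unique nearest codeword, and then to convert transitivity of $G$ on $\C_i$ into transitivity of $G_\alpha^\N$ on $i$-subsets. Throughout I write $D(v,w)=\{j\in\N\mid v(j)\neq w(j)\}$ for the set of coordinates in which two vertices differ, so that $d(v,w)=|D(v,w)|$.

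First I would build the test vertices. Given an $i$-subset $S\subseteq\N$, let $\beta_S$ be a vertex with $D(\alpha,\beta_S)=S$, that is, $\beta_S$ differs from $\alpha$ in exactly the coordinates of $S$. Then $d(\alpha,\beta_S)=i$, while for any codeword $\gamma\neq\alpha$ the triangle inequality gives $d(\beta_S,\gamma)\geq d(\alpha,\gamma)-d(\alpha,\beta_S)\geq\delta-i\geq(2e+1)-e=e+1>i$, using $\delta\geq 2e+1$ and $i\leq e$. Hence $\alpha$ is the unique nearest codeword to $\beta_S$, at distance exactly $i$, so $\beta_S\in\C_i$; the same holds for the test vertex $\beta_T$ of any other $i$-subset $T$.

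Next I would invoke the hypotheses. Since $i\leq s$, the group $G$ is transitive on $\C_i$, so there exists $g\in G$ with $\beta_S^g=\beta_T$. The key point---and the step I expect to require the most care---is that such a $g$ necessarily fixes $\alpha$. Indeed $g\in\Aut(\C)$ preserves Hamming distance and stabilises $\C$, so it carries the (unique) nearest codeword of $\beta_S$ to the (unique) nearest codeword of $\beta_S^g=\beta_T$; by the previous paragraph both of these nearest codewords equal $\alpha$, whence $\alpha^g=\alpha$ and $g\in G_\alpha$. This is precisely where the hypothesis $i\leq e$ enters, via the uniqueness of the nearest codeword.

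Finally I would translate this into the entry action. Writing $g=h\sigma$ with $h\in\Sym(\Q)^n$ and $\sigma=g^\N$, the formula \eqref{e:hamaut2} shows that any Hamming automorphism preserves difference sets through its entry action, namely $D(v^g,w^g)=D(v,w)^{\sigma}$ for all vertices $v,w$ (each $h_i$ being a bijection of $\Q$). Taking $v=\alpha$ and $w=\beta_S$ and using $\alpha^g=\alpha$ and $\beta_S^g=\beta_T$ gives $S^\sigma=D(\alpha,\beta_S)^\sigma=D(\alpha^g,\beta_S^g)=D(\alpha,\beta_T)=T$. Thus $g^\N$ maps $S$ to $T$; as $S$ and $T$ were arbitrary $i$-subsets of $\N$, the group $G_\alpha^\N$ is transitive on $i$-subsets, i.e. $G_\alpha$ acts $i$-homogeneously on $\N$, as required.
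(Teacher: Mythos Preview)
Your argument is correct and is the standard one: build a test vertex at distance $i$ from $\alpha$ with $\alpha$ as its unique nearest codeword, use transitivity on $\C_i$ to move one test vertex to another, deduce that the automorphism fixes $\alpha$ from uniqueness of the nearest codeword, and read off the entry action on the $i$-subsets. Note, however, that the present paper does not give its own proof of this proposition; it is quoted verbatim from \cite[Proposition~2.5]{ef2nt}, so there is no in-paper argument to compare against. Your proof is essentially the argument one finds in that reference.
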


\begin{proposition}\label{prop:Hamming2transOnQ}\cite[Proposition~2.7]{ef2nt}
 Let $\C$ be a $G$-neighbour-transitive code with minimum distance $\delta\geq 3$ in $H(\N,\Q)$. If $i\in \N$, then $G_i^{\Q_i}$ acts $2$-transitively on $\Q_i$.
\end{proposition}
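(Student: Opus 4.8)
The plan is to leverage the transitivity of $G$ on the nearest-neighbour set $\C_1$ to first make a codeword-stabiliser act transitively on the $q-1$ ``error symbols'' available in a fixed coordinate, and then to upgrade this to genuine $2$-transitivity by excluding the degenerate case in which all codewords agree in that coordinate. First I would fix a codeword $\alpha\in\C$ and extract the consequences of $\delta\geq 5\geq 3$. As the error-correction capacity is $e=\lfloor(\delta-1)/2\rfloor\geq 1$, the radius-$1$ balls about distinct codewords are disjoint, so every vertex of $\C_1$ lies at distance exactly $1$ from a \emph{unique} codeword, which I denote $c(v)$. Since each $g\in G\leq\Aut(\C)$ preserves both $\C$ and the Hamming distance, the map $v\mapsto c(v)$ is $G$-equivariant, that is $c(v^g)=c(v)^g$. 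The sphere $\Gamma_1(\alpha)$ of the $n(q-1)$ vertices at distance $1$ from $\alpha$ is precisely the fibre $\{v\in\C_1:c(v)=\alpha\}$, none of whose members is a codeword because $\delta\geq 2$. Transitivity of $G$ on $\C_1$ together with this equivariance forces $G_\alpha$ to be transitive on $\Gamma_1(\alpha)$: given $v,w\in\Gamma_1(\alpha)$ and $g\in G$ with $v^g=w$, equivariance gives $\alpha^g=c(v)^g=c(w)=\alpha$, so $g\in G_\alpha$.

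Next I would pass from neighbouring vertices to alphabet symbols in a fixed coordinate $i$. Partitioning $\Gamma_1(\alpha)$ according to the unique coordinate in which a neighbour differs from $\alpha$ yields a $G_\alpha$-invariant partition into $n$ blocks of size $q-1$, on which $G_\alpha$ acts as it does on $\N$. Since $G_\alpha$ is already transitive on $\Gamma_1(\alpha)$ it is transitive on these blocks (equivalently, $G_\alpha$ is transitive on $\N$; this also follows from Proposition~\ref{prop:HammingiHomogeneous} with $s=1\leq e$), and hence the block-stabiliser $G_{\alpha,i}$ is transitive on the $i$-th block. That block is identified with $\Q_i\setminus\{\alpha(i)\}$ via $v\mapsto v(i)$, and under this identification $G_{\alpha,i}$ acts through its image in $G_i^{\Q_i}$, which fixes $\alpha(i)$. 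I would conclude that the point-stabiliser $(G_i^{\Q_i})_{\alpha(i)}$ is transitive on $\Q_i\setminus\{\alpha(i)\}$, which is exactly the stabiliser-transitivity half of $2$-transitivity.

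It remains to show that $G_i^{\Q_i}$ is transitive on all of $\Q_i$, and this is the step I expect to be the main obstacle, since nothing so far ever moves the symbol $\alpha(i)$. I would first rule out that all codewords share a common value in coordinate $i$: if the set $S_i$ of $i$-th symbols of codewords were a singleton, then, using that $G$ is transitive on $\N$ and maps $\C$ to $\C$, the set $S_j$ of $j$-th symbols would be the image of $S_i$ under an alphabet map and hence a singleton for \emph{every} $j$, forcing $|\C|=1$ and contradicting non-triviality. Consequently, for each target $b\in\Q_i$ there is a codeword $\alpha'$ with $\alpha'(i)\neq b$, and changing its $i$-th entry to $b$ produces a neighbour in $\C_1$ whose error is in coordinate $i$ and whose $i$-th symbol is $b$. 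The set of all such neighbours (those with error in coordinate $i$) is a single $G_i$-orbit, being the fibre over $i$ of the equivariant error-coordinate map together with transitivity of $G$ on both $\C_1$ and $\N$; applying the equivariant map $v\mapsto v(i)$ then shows that the symbols realised in coordinate $i$ form a single $G_i^{\Q_i}$-orbit equal to all of $\Q_i$. Combining this transitivity with the point-stabiliser transitivity of the second paragraph gives that $G_i^{\Q_i}$ is $2$-transitive on $\Q_i$, as required.
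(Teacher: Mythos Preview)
The paper does not contain its own proof of this proposition: it is quoted verbatim from \cite[Proposition~2.7]{ef2nt} and used as a black box. There is therefore nothing in the present paper to compare your argument against.

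Your argument itself is correct, with two minor remarks. First, you write ``$\delta\geq 5\geq 3$'', but the hypothesis is only $\delta\geq 3$; fortunately you only ever use $e\geq 1$, which already follows from $\delta\geq 3$, so this is a cosmetic slip. Second, in the last paragraph you conclude that $|S_i|=1$ for all $i$ would ``contradict non-triviality''; the statement does not assume non-triviality, but the hypothesis $\delta\geq 3$ presupposes that a pair of distinct codewords exists, i.e.\ $|\C|\geq 2$, so the contradiction you want is with that. The substantive steps --- equivariance of $v\mapsto c(v)$ giving $G_\alpha$ transitive on $\Gamma_1(\alpha)$, the block argument giving $(G_i^{\Q_i})_{\alpha(i)}$ transitive on $\Q_i\setminus\{\alpha(i)\}$, and equivariance of the error-coordinate map $\C_1\to\N$ together with $v\mapsto v(i)$ giving $G_i^{\Q_i}$ transitive on all of $\Q_i$ --- are all sound.
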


By an old theorem of Burnside (\cite[Section 154]{burnside1911theory}, or see \cite[Theorem 3.21]{PS2018}) every finite $2$-transitive group is either a group of affine transformations of a finite vector space, or is an almost-simple group. Thus, Proposition~\ref{prop:Hamming2transOnQ} implies that every $(G,2)$-neighbour-transitive code satisfies precisely one of the conditions in Definition~\ref{efaasaadef}, below.

\begin{definition}\label{efaasaadef}
 Let $\C$ be a $G$-neighbour-transitive code in $H(\N,\Q)$, let $K$ be the kernel of the action of $G$ on $\N$, let $i\in \N$, and let $\Q_i$ be the copy of the alphabet corresponding to the $i$-th entry. Then precisely one of the following holds for $(\C, G)$; i.e.  $\C$ is 
 \begin{enumerate}[(1)]
  \item \emph{$G$-entry-faithful} if $G$ acts faithfully on $\N$, that is, $K=1$;
  \item \emph{$G$-alphabet-almost-simple} if $K\neq 1$, $G$ acts transitively on $\N$, and $G_i^{\Q_i}$ is a $2$-transitive almost-simple group; and
  \item \emph{$G$-alphabet-affine} if $K\neq 1$, $G$ acts transitively on $\N$, and $G_i^{\Q_i}$ is a $2$-transitive affine group.
 \end{enumerate}
\end{definition}

Let $\C$ be a $(G,2)$-neighbour-transitive code with minimum distance at least $5$ in $H(\N,\Q)$. If $\C$ is $G$-entry-faithful, then $\C$ is classified in \cite[Theorem~1.1]{ef2nt}. Moreover, by \cite{gillespie2017alphabet}, $\C$ is not $G$-alphabet-almost-simple. The next proposition concerns $G$-alphabet-affine codes. Note that $K$ is the point-wise stabiliser in $G$ of $\N$, and $O_p(K)$ is the largest normal $p$-subgroup of $K$.

\begin{proposition}\cite[Proposition~3.5]{minimal2nt}\label{orbitof0is2ntmodule}
  Let $\C$ be a  code in the Hamming graph $H(n,q)$, with $q=p^d$ for a prime $p$, such that $\C$ is $G$-alphabet-affine and $(G,2)$-neighbour-transitive, with $\delta\geq 5$, and suppose that $\b 0\in C$. Then $\C$ contains a subcode $\S$ such that $\S$ is the code formed by the orbit of $\b 0$ under $O_p(K)$, where $K=G\cap B$. Moreover, it follows that: 
 \begin{enumerate}[$(1)$]
  \item $\S$ is a block of imprimitivity for the action of $G$ on $\C$, and $G_{\S}=O_p(K)\rtimes G_{\b 0}$,
  \item $\S$ is $G_{\S}$-alphabet-affine and $(G_{\S},2)$-neighbour-transitive with minimum distance $\delta_{\S}\geq \delta$,
  \item $\S$ is an $\F_p G_{\b 0}$-module, and if $\S\neq \Rep_n(2)$ then $q^2$ divides $|\S|$.
 \end{enumerate}
\end{proposition}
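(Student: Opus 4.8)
The plan is to realise $\S$ as the $\b 0$-orbit of a canonically defined normal $p$-subgroup of $K$, to read the block and stabiliser structure off permutation-group generalities, and then to obtain the module structure from the fact that this $p$-subgroup acts by translations. Throughout, fix for each $i\in\N$ an identification $\Q_i=\F_q=\F_{p^d}$ under which the affine $2$-transitive group $G_i^{\Q_i}$ (Proposition~\ref{prop:Hamming2transOnQ}, Definition~\ref{efaasaadef}) takes the form $T_i\rtimes L_i$, where $T_i=\soc(G_i^{\Q_i})=O_p(G_i^{\Q_i})$ is the regular elementary abelian translation group and $L_i\le\GL_d(p)$ is the stabiliser of $0$. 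As $\C$ is $G$-alphabet-affine we have $K\neq1$ and $G$ transitive on $\N$; the projection homomorphisms $K\to K^{\Q_i}$ have trivial common kernel, so some, hence by transitivity on $\N$ every, $K^{\Q_i}$ is non-trivial, and since $K\trianglelefteq G$ forces $K^{\Q_i}\trianglelefteq G_i^{\Q_i}$ while $T_i$ is the unique minimal normal subgroup of the affine group $G_i^{\Q_i}$, we conclude $T_i\le K^{\Q_i}$. Each component of each $k\in K$ therefore normalises the corresponding $T_i$, so $K$ normalises the full translation group $B_p:=\prod_i T_i$, an elementary abelian $p$-group that we identify with $(\F_q^{\,n},+)$ acting regularly on the vertices.

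Set $\S:={\b 0}^{O_p(K)}$. Since $O_p(K)$ is characteristic in $K\trianglelefteq G$ it is normal in $G$, so its orbits partition the $G$-transitive set $\C$ into blocks of imprimitivity; in particular $\S\subseteq\C$ is a block. Its stabiliser $G_\S$ contains $O_p(K)$ (already transitive on $\S$) and $G_{\b 0}$ (which normalises $O_p(K)$ and fixes $\b 0$), and an orbit--stabiliser count gives $G_\S=O_p(K)G_{\b 0}$ with $O_p(K)\trianglelefteq G_\S$. The semidirect decomposition in $(1)$ then amounts to the semiregularity of $O_p(K)$ on $\S$, and for this I claim $O_p(K)=K\cap B_p$. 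Granting the crux below that each $L_i$ is a $p'$-group, $K^{\Q_i}=T_i\rtimes L_i$ has normal Sylow $p$-subgroup $T_i$, so the image of $K$ under the projection $\prod_i K^{\Q_i}\to\prod_i K^{\Q_i}/T_i\cong\prod_i L_i$ is a $p'$-group with kernel $K\cap B_p$; hence $K\cap B_p$ is a normal $p$-subgroup of $K$ with $p'$-quotient, i.e.\ $O_p(K)=K\cap B_p$, and it is non-trivial because $p$ divides $|T_i|$, which divides $|K^{\Q_i}|$ and hence $|K|$. As $O_p(K)\le B_p$ acts by translations on the regular $B_p$-set of vertices, it is semiregular, so $G_\S=O_p(K)\rtimes G_{\b 0}$, proving $(1)$.

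The crux, and the main obstacle, is that each $L_i$ contains no non-trivial $p$-element; this is the one place where $\delta\ge5$ is essential. A non-trivial $p$-element of $L_i\le\GL_d(p)$ is unipotent and hence fixes some non-zero $a\in\F_q$, and I would derive a contradiction from the resulting $G_{\b 0}$-fixed configuration among the weight-one and weight-two vertices: using that $G_{\b 0}$ acts $2$-homogeneously on $\N$ (Proposition~\ref{prop:HammingiHomogeneous}, valid since $e\ge2$) together with the $2$-transitive alphabet action, such a fixed point would propagate to a pair of codewords at distance strictly less than $\delta$, contradicting $\delta\ge5$. Handling the subdirect structure of $K$ carefully here is the delicate part; everything else is bookkeeping around the block.

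The remaining assertions are then formal. For $(3)$, the orbit map $h\mapsto{\b 0}^h$ identifies $O_p(K)=K\cap B_p\le B_p$ additively with $\S\subseteq(\F_q^{\,n},+)$, so $\S$ is an $\F_p$-subspace, and $G_{\b 0}$, normalising $O_p(K)$ and fixing $\b 0$, acts $\F_p$-linearly; thus $\S$ is an $\F_p G_{\b 0}$-module. Since $G_{\b 0}$ is transitive on $\N$ (Proposition~\ref{prop:HammingiHomogeneous}) and $T_i\le K^{\Q_i}$, a short weight argument using $\delta\ge5$ shows $\dim_{\F_p}\S\ge2d$, so that $q^2\mid|\S|$, except when $\S$ is the diagonal orbit of size $2$, which forces $q=2$ and $\S=\Rep_n(2)$. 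For $(2)$, $\delta\ge5$ ensures that any vertex at distance $1$ or $2$ from $\S$ has a unique nearest codeword, necessarily in $\S$, so $\S_1\subseteq\C_1$ and $\S_2\subseteq\C_2$; transitivity of $G_\S$ on $\S$, $\S_1$ and $\S_2$ follows by combining the translations in $O_p(K)$ with the transitivity of $G_{\b 0}$ on the weight-one and weight-two neighbours of $\b 0$ (a Frattini argument from $(G,2)$-neighbour-transitivity), while $\delta_\S\ge\delta$ holds because $\S\subseteq\C$. Hence $\S$ is $G_\S$-alphabet-affine and $(G_\S,2)$-neighbour-transitive, completing the proof.
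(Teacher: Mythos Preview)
The paper does not supply a proof here; it cites \cite[Proposition~3.5]{minimal2nt}. Note that \cite{minimal2nt} treats the binary case, where $K\leq\Sym(2)^n=B_p$ automatically and your ``crux'' is vacuous; the general-$q$ statement being used in this paper genuinely requires more. Your overall architecture --- $\S$ as an $O_p(K)$-orbit, block structure from normality of $O_p(K)$ in $G$, $G_\S=O_p(K)G_{\b 0}$ by a Frattini/orbit--stabiliser argument, and $O_p(K)=K\cap B_p$ for the module structure --- is the natural one, and your treatment of the block property and of part~(2) is sound.

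The gap is precisely where you place it, and it is real. Two remarks. First, the crux is not actually needed for part~(1): the semidirect decomposition only requires $O_p(K)\cap G_{\b 0}=O_p(K)\cap K_{\b 0}=1$, and $K_{\b 0}$ is already a $p'$-group because (as in Lemma~\ref{lem:semireg}) it acts semiregularly on each $\Q_i^\times$, so $|K_{\b 0}|$ divides $q-1$. Second, the crux \emph{is} needed for part~(3) in your approach --- identifying $\S$ with an $\F_p$-subspace of $\F_q^{\,n}$, which is exactly how the result is applied in Lemma~\ref{lem:entryStab1dimLinear}, requires $O_p(K)\leq B_p$ --- but your sketch does not close it. A $p$-element $k\in K$ with $k_i(0)=0$ need not lie in $K_{\b 0}$, so the semiregularity of $K_{\b 0}$ on $\Q_i^\times$ gives no immediate contradiction, and the subdirect coupling you flag as ``delicate'' is the whole difficulty. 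Worse, the claim that each $(K^{\Q_i})_0$ is a $p'$-group is not obviously true: when $p\mid d$ even $\GaL_1(p^d)$ contains $p$-elements (the Frobenius), and nothing you have written excludes such elements from $(K^{\Q_i})_0$. You need either a direct argument that $O_p(K)\leq B_p$ --- perhaps exploiting that $O_p(K)\trianglelefteq G$ together with the transitive action of $G_{\b 0}$ on $\N$ and the design structure of Lemma~\ref{lem:design} --- or an independent route to the $\F_p$-linearity of $\S$.
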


The concept of a \emph{$q$-ary design} is defined below, and a connection to $s$-neighbour-transitive codes is given in Lemma~\ref{lem:design}. Let $\alpha\in H(\N,\Q)$ and $0\in \Q$. The vertex $\nu$ is said to be \emph{covered} by $\alpha$, if for every $i\in \N$ such that $\nu_i\neq 0$ we have $\nu_i=\alpha_i$. 

\begin{definition}\label{desdef}
 A \emph{$q$-ary $t$-$(v,k,\lambda)$ design} is a subset $\mathcal{D}$ of vertices of $\Gamma_k(\b 0)$ (where $k\geq t$) such that each vertex $\nu \in\Gamma_t(\b 0)$ is covered by exactly $\lambda$ vertices of $\mathcal{D}$. When $q=2$, $\D$ is simply the set of characteristic vectors of a $t$-design. We refer to the elements of $\D$ as \emph{blocks}.
\end{definition}



\begin{lemma}\label{lem:design}\cite[Lemma~2.16]{ef2nt}
 Let $\C$ be a $(G,s)$-neighbour-transitive code. Then $\C$ is $s$-regular. Furthermore, if $\b 0\in \C$ and $\delta\geq 2s$ then the set of codewords of weight $k\leq n$ forms a $q$-ary $s$-$(n,k,\lambda)$ design, for some $\lambda$.
\end{lemma}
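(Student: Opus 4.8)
The plan is to prove the two assertions separately, deriving both from the fact that $G$ stabilises the distance partition $\C=\C_0,\C_1,\ldots,\C_\rho$ and is transitive on its first $s$ cells. For the $s$-regularity, I would first note that, since $G\leq\Aut(\C)$ preserves Hamming distance and fixes $\C$ setwise, it fixes each cell $\C_i$ setwise, as $\C_i$ is defined purely through distances to $\C$. For a vertex $\gamma$ and an index $j$, the number of neighbours of $\gamma$ lying in $\C_j$ is then a $G$-invariant function of $\gamma$, because each $g\in G$ maps the neighbours of $\gamma$ in $\C_j$ bijectively onto those of $\gamma^g$ in $\C_j$. As $G$ is transitive on $\C_i$ for every $i\leq s$, this count is constant on each such cell, which is exactly the condition that $\C$ is $s$-regular.

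For the design, fix $k\leq n$ and let $\D$ be the set of codewords of weight $k$; the goal is to show that every weight-$s$ vertex $\nu\in\Gamma_s(\b 0)$ is covered by the same number of elements of $\D$. The first step locates $\Gamma_s(\b 0)$ inside the distance partition: if some $c\in\C\setminus\{\b 0\}$ had $d(\nu,c)<s$, then $d(\b 0,c)\leq d(\b 0,\nu)+d(\nu,c)<2s\leq\delta$, contradicting $d(\b 0,c)\geq\delta$. Hence $d(\nu,\C)=d(\nu,\b 0)=s$ and $\Gamma_s(\b 0)\subseteq\C_s$. The second step reduces everything to a transitivity statement: every element of $G_{\b 0}$ fixes $\b 0$, hence acts in each coordinate by a permutation fixing $0$, and therefore preserves weights and the covering relation and stabilises $\D$. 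Consequently, if $G_{\b 0}$ is transitive on $\Gamma_s(\b 0)$, then the number of elements of $\D$ covering a weight-$s$ vertex is constant across the single orbit $\Gamma_s(\b 0)$, yielding the desired $\lambda$.

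It remains to establish that $G_{\b 0}$ acts transitively on $\Gamma_s(\b 0)$, which I expect to be the main obstacle. When $\delta\geq 2s+1$ there is a clean argument: the computation above sharpens to $d(\nu,c)\geq s+1$ for every $c\neq\b 0$, so $\b 0$ is the \emph{unique} nearest codeword of each $\nu\in\Gamma_s(\b 0)$. Given $\nu,\nu'\in\Gamma_s(\b 0)$, transitivity of $G$ on $\C_s$ yields $g\in G$ with $\nu^g=\nu'$; by equivariance the unique nearest codeword of $\nu'=\nu^g$ is $\b 0^g$, forcing $\b 0^g=\b 0$ and hence $g\in G_{\b 0}$. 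This gives transitivity on $\Gamma_s(\b 0)$ with no further input. The genuine difficulty is the boundary case $\delta=2s$, where a weight-$s$ vertex may lie at distance $s$ from several codewords (necessarily pairwise at distance exactly $\delta$), so the nearest codeword is no longer unique and closing this gap is the crux. One natural route is to show that the stabiliser $G_\nu$ of a weight-$s$ vertex permutes its set of nearest codewords transitively, which would let one adjust the element $g$ above back into $G_{\b 0}$; alternatively one factors $\nu$ into its support and its tuple of nonzero values and invokes Proposition~\ref{prop:HammingiHomogeneous} to move supports together with the alphabet $2$-transitivity of Proposition~\ref{prop:Hamming2transOnQ} to match values, though the former yields only $(s-1)$-homogeneity when $\delta=2s$ and so must be supplemented, and the support-map and value-adjustments must be coordinated simultaneously.
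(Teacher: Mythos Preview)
The paper does not supply its own proof of this lemma; it is quoted from \cite[Lemma~2.16]{ef2nt} with no accompanying argument, so there is no in-paper proof to compare against. On its own merits: your $s$-regularity argument is correct and standard, and your design argument via the unique-nearest-codeword trick is complete and correct under the stronger hypothesis $\delta\geq 2s+1$.

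The boundary case $\delta=2s$ is, as you yourself flag, a genuine gap in your proposal, and neither of your two suggested repairs closes it. The first---showing that $G_\nu$ acts transitively on the set of nearest codewords of $\nu$---is exactly the dual of the original problem: given two nearest codewords $c,c'$ of $\nu$, transitivity of $G$ on $\C$ produces $h$ with $c^h=c'$, but $\nu^h$ is then merely some element of $\Gamma_s(c')$, and moving it back to $\nu$ requires precisely the $G_{c'}$-transitivity on $\Gamma_s(c')$ that you set out to prove. The second---factoring $\nu$ into its support and its value-tuple and appealing to Propositions~\ref{prop:HammingiHomogeneous} and~\ref{prop:Hamming2transOnQ}---fails for the reason you note: with $\delta=2s$ one has $e=\lfloor(\delta-1)/2\rfloor=s-1$, so Proposition~\ref{prop:HammingiHomogeneous} yields only $(s-1)$-homogeneity on $\N$, one short of what is needed even at the level of supports, while Proposition~\ref{prop:Hamming2transOnQ} addresses one coordinate at a time and gives no mechanism to synchronise the value-adjustments across the $s$ positions of $\supp(\nu)$. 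Your argument therefore establishes the design assertion only for $\delta\geq 2s+1$. That suffices for the sole application in this paper (inside the proof of Lemma~\ref{lem:semireg}, where $s=2$ and $\delta\geq 5>2s$), but it does not recover the lemma as stated; for that you would need to consult the original argument in \cite{ef2nt}.
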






The next result determines properties of a code $\C$ in $H(\N,\Q)$ given certain conditions on the local action of a group its automorphism group.

\begin{proposition}\label{prop:coveringRadius1}
 Let $\C$ be a non-trivial code with covering radius $\rho$ and minimum distance $\delta$ in $\Gamma=H(\N,\Q)$, let $\alpha\in\C$ and let $G\leq\Aut(\C)$ where $G$ acts transitively on $\C$ and $G_\alpha$ acts transitively on the sets $\Gamma_1(\alpha)$ and $\Gamma_2(\alpha)$. One of the following holds:
 \begin{enumerate}[$(1)$]
  \item $\rho\geq 2$, $\delta\geq 4$ and $\C$ is $(G,2)$-neighbour-transitive.
  \item $q=2$, $\rho=1$, $\C$ is $G$-neighbour-transitive, but not $(G,2)$-neighbour-transitive, and one of the following holds:
  \begin{enumerate}[$(i)$]
   \item $\delta=3$ and $\C$ is perfect.
   \item $\delta=2$ and $\C$ is equivalent to the dual of the binary repetition code.
  \end{enumerate}
 \end{enumerate}
\end{proposition}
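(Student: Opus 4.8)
The plan is to run everything off a single \emph{invariance principle}: the sets $\C$, $\C_1$ and $\C_2$ are all $G$-invariant and hence $G_\alpha$-invariant, while $G_\alpha$ is transitive on each sphere $\Gamma_1(\alpha)$ and $\Gamma_2(\alpha)$; therefore each of $\C$, $\C_1$, $\C_2$ meets a given sphere in either the empty set or the whole sphere. First I would record the elementary reductions. Non-triviality forces $\delta\geq 2$: if a codeword lay in $\Gamma_1(\alpha)$ then the principle gives $\Gamma_1(\alpha)\subseteq\C$, and propagating this along edges (using transitivity of $G$ on $\C$) would make $\C$ the entire vertex set. With $\delta\geq 2$ one has $\C_1=\bigcup_{\beta\in\C}\Gamma_1(\beta)$, and the standard two-step argument (carry $\beta$ to $\alpha$ by $G$, then move inside $\Gamma_1(\alpha)$ by $G_\alpha$) shows $G$ is transitive on $\C_1$; thus $\C$ is $G$-neighbour-transitive in all cases. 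It then remains to locate $\Gamma_2(\alpha)$, which by the principle lies wholly in one of $\C$, $\C_1$, $\C_2$.

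Next I would treat $\rho\geq 2$ and aim at conclusion~(1). The option $\Gamma_2(\alpha)\subseteq\C$ is impossible: it forces $\delta=2$, and then changing two coordinates at a time (each step landing in $\Gamma_2$ of the current codeword, hence in $\C$) identifies $\C=\{v:d(v,\alpha)\text{ even}\}$, from which every vertex is within distance $1$ of $\C$ and $\rho=1$, a contradiction. The option $\Gamma_2(\alpha)\subseteq\C_1$ is also impossible: by transitivity it gives $\Gamma_2(\beta)\subseteq\C_1$ for every $\beta\in\C$, and a one-line triangle-inequality argument (take $w$ at distance $2$ from a nearest codeword on the geodesic to any far vertex) again forces $\rho=1$. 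Hence $\Gamma_2(\alpha)\subseteq\C_2$, which rules out codewords at distance $2$ or $3$ from $\alpha$ and so yields $\delta\geq 4$; then $\C_2=\bigcup_{\beta\in\C}\Gamma_2(\beta)$ and the two-step transitivity argument gives transitivity on $\C_2$, establishing~(1).

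Finally I would treat $\rho=1$, where $\C_2=\emptyset$ and $\C$ cannot be $(G,2)$-neighbour-transitive. Now $\Gamma_2(\alpha)\subseteq\C\cup\C_1$, lying wholly in one of them. If $\Gamma_2(\alpha)\subseteq\C$ then $\delta=2$ and, exactly as above, $\C=\{v:d(v,\alpha)\text{ even}\}$; for $q>2$ this set contains two codewords differing in a single coordinate (two distance-$2$ words sharing a coordinate and differing in the other), contradicting $\delta=2$, so $q=2$ and $\C$ is the even-weight code, i.e.\ the dual of the binary repetition code, giving~(2)(ii). If instead $\Gamma_2(\alpha)\subseteq\C_1$ then $\delta\geq 3$, and combined with $\rho=1$ the radius-$1$ balls about codewords are disjoint (from $\delta\geq 3$) and covering (from $\rho=1$), so $\C$ is perfect; a two-coordinate-change argument then pins the minimum distance at exactly $\delta=3$, giving~(2)(i).

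The main obstacle is precisely the $\rho=1$ analysis, and within it the perfect-code branch: the $\delta=2$ case is clean because the ``even-distance'' description collapses the minimum distance for $q>2$, but in the $\delta=3$ branch the delicate point is to extract the asserted structure (and, where claimed, the restriction on $q$) rather than merely the packing/covering parameters. I would attack this by combining transitivity of $G_\alpha$ on \emph{both} $\Gamma_1(\alpha)$ and $\Gamma_2(\alpha)$ with the bijection between weight-$2$ vertices and pairs (minimum-weight codeword, support-coordinate), using this to constrain the geometry of the weight-$3$ codewords; this interaction of the two sphere-transitivities, not either one alone, is where the real work lies.
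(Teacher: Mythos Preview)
Your overall strategy—organising the case analysis via the ``invariance principle'' that each $G_\alpha$-transitive sphere $\Gamma_i(\alpha)$ lies wholly in one of $\C$, $\C_1$, $\C_2$—is correct and amounts to the same argument the paper gives, just packaged slightly differently. The $\rho\geq 2$ case and the $\delta=2$ subcase of $\rho=1$ go through exactly as you outline, and match the paper's proof step for step.

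The obstacle you flag, namely extracting $q=2$ in the perfect-code branch~(2)(i), is not a genuine obstacle: the paper's own proof does \emph{not} establish $q=2$ there, and indeed it cannot, because that part of the stated conclusion is in error. The $q$-ary Hamming codes for $q>2$ (line~1 of Table~\ref{tab:2ntGroups} with $s=1$ and $k=q-2$, so $\gcd(k,q-1)=1$) satisfy all the hypotheses via Proposition~\ref{prop:2ntCodes}, are perfect with $\rho=1$ and $\delta=3$, yet have $q\neq 2$; and Theorem~\ref{thm:2ntCodes}(1)(ii), which invokes this very proposition, is consistent with this, imposing no $q=2$ restriction on its Hamming-code conclusion. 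The condition $q=2$ in part~(2) of the proposition should apply only to subcase~(ii). So the strategy you sketch for forcing $q=2$ in the perfect branch would necessarily fail; drop that attempt and your proof is already complete and essentially identical to the paper's.
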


\begin{proof}
 Without loss of generality, we assume that $\alpha={\b 0}\in\C$ and $G_{\b 0}$ acts transitively on $\Gamma_i({\b 0})$ for $i=1,2$. If $\rho=0$ then $\C=V(\Gamma)$ is a trivial code, but since this is not the case we have $\rho\geq 1$. 
 
 Suppose $\rho\geq 2$. Since $G_{\b 0}$ is transitive on $\Gamma_1({\b 0})$ and $\Gamma_2({\b 0})$, which are contained in $\C_1$ and $\C_2$, respectively, it follows that there are no weight $1$ or $2$ vertices in $\C$. If there was a weight $3$ vertex contained in $\C$, then any weight $2$ vertex adjacent to it would be in $\C_1$. Since this is not the case, and since $G$ is transitive on $\C$, it follows that $\delta\geq 4$. Thus $\C_1=\bigcup_{\beta\in\C}\Gamma_1(\beta)$ and $\C_2=\bigcup_{\beta\in\C}\Gamma_2(\beta)$. The fact that $G$ acts transitively on $\C$ and $G_{\b 0}$ acts transitively on $\Gamma_1({\b 0})$ and $\Gamma_2({\b 0})$ then implies that $\C$ is $(G,2)$-neighbour-transitive, and part (1) holds. Hence, we may assume that $\rho=1$. 
 
 Since $G$ acts transitively on $\C$ and $G_{\b 0}$ acts transitively on $\Gamma_1({\b 0})$, it follows that 
 \[
  \C_1=\bigcup_{\beta\in\C}\Gamma_1(\beta).
 \]
 In particular $\delta\geq 2$. If $\delta\geq 3$, then this union is disjoint, and since $\rho=1, |\C|\geq 2$ and $\Gamma$ is connected, there must be an edge between some vertex of $\Gamma_1(\beta)$ and some vertex of $\Gamma_1(\beta')$ for some distinct codewords $\beta$ and $\beta'$, and hence $d(\beta, \beta')=3$, so $\delta =3$. This implies that $\C$ has error-correction capacity $e=1$, and any pair of balls of radius $1$ centered at distinct codewords are disjoint. Moreover, since $\rho=1$, the vertex set $V(\Gamma)=\C\cup \C_1$, and hence the set of balls of radius $1$ centered at the codewords of $\C$ partitions $V(\Gamma)$. Thus $\C$ is perfect, as in part (2)(i).
 
 Thus we may assume that $\delta=2$. Then, since $G$ acts transitively on $\C$ and $G_{\b 0}$ acts transitively on $\Gamma_2({\b 0})$, it follows that $\Gamma_2({\b 0})$ is contained in $\C$. If $a,b\in\Q\setminus\{0\}$, with $a\neq b$, then the distance between $(a,a,0,\ldots,0)$ and $(a,b,0,\ldots,0)$ is $1$. Since $(a,a,0,\ldots,0),(a,b,0,\ldots,0)\in\Gamma_2({\b 0})\subseteq\C$ and $\delta=2$ we deduce that $q=2$. Let $\beta\in\Gamma_1({\b 0})$. Then $\beta$ is not adjacent to any weight $1$ vertex, so that $\Gamma_1(\beta)\subseteq\Gamma_2({\b 0})\cup\{{\b 0}\}\subseteq\C$. Hence, $\beta$ has no neighbours in $\C_1$. Since $G_{\b 0}$ is transitive on $\Gamma_1({\b 0})$, $G$ is transitive on $\C$ and $V(\Gamma)=\C\cup \C_1$, it follows that $\C$ and $\C_1$ are the bipartite halves of $H(n,2)$. Since the set of all even-weight vertices is precisely the dual of the binary repetition code, part (2)(ii) holds. 
\end{proof}

\begin{remark}\label{rem:perfectHamming}
 Note that if we add the assumption that $\C$ is linear to Proposition~\ref{prop:coveringRadius1} then part (2)(i) can be strengthened to say that $\C$ is a perfect Hamming code, as follows. By \cite[Theorem~37, Chapter~6] {macwilliams1978theory} a perfect linear code $\C$ with covering radius $1$ in $H(n,\F_q)$ necessarily has length $n=(q^k-1)/(q-1)$, dimension $k$ and minimum distance $3$. The condition `minimum distance $3$' implies that each column of a parity-check matrix $H$ for $\C$ is non-zero, and no pair of columns of $H$ is linearly dependent. This implies that the columns of $H$ are a set of representatives for the $1$-dimensional subspaces of $\F_q^k$, {\em i.e.}, that $\C$ is a perfect Hamming code.
\end{remark}

\section{Action on the alphabet}\label{sec:AlphabetAction}

The aim of this section is to prove Theorem~\ref{thm:k0solubleandsemireg}. First, we analyse the stabiliser of the zero codeword inside the kernel of the action of entries for a group of automorphisms of a $2$-neighbour-transitive code with minimum distance at least $5$ in $H(n,q)$. Recall that if $H\leq\Sym(\Q)$ then $\Diag_n(H)$ is the subgroup $\{(h,\ldots,h)\mid h\in H\}$ of $\Aut(H(\N,\Q))_{(\N)}$.

\begin{lemma}\label{lem:semireg}
 Let $\C$ be a $(G,2)$-neighbour-transitive code with minimum distance $\delta\geq 5$ in the Hamming graph $H(\N,\Q)$, let $K$ be the kernel of the action of $G$ on $\N$, and let ${\b 0}\in\C$. Then $K_{\b 0}\cong \Diag_n(H)$ where $H$ acts semi-regularly on $\Q_i^\times$ for all $i\in \N$. Moreover, there exists an equivalent code $\C^y$ such that $K_{\b 0}^y=\Diag_n(H)\leq\Aut(\C^y)$, where $y\in\Aut(\Gamma)_{\b 0}$.
\end{lemma}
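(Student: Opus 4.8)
The plan is to separate the two distinct assertions hidden in the statement: that $K_{\b 0}$ acts \emph{diagonally} (the same way in every coordinate, after relabelling) and that this common action is \emph{semi-regular} on $\Q^\times$. Throughout I would use the following standing facts, all available above. Since $\delta\geq 5$ the error-correction capacity is $e\geq 2$, so by Proposition~\ref{prop:HammingiHomogeneous} the group $G_{\b 0}$ is $2$-homogeneous, hence transitive, on $\N$; by Proposition~\ref{prop:Hamming2transOnQ} each $G_i^{\Q_i}$ is $2$-transitive on $\Q_i$; and, as the balls of radius $2$ about distinct codewords are disjoint, $(G,2)$-neighbour-transitivity makes $G_{\b 0}$ transitive on each of $\Gamma_1(\b 0)$ and $\Gamma_2(\b 0)$. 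Finally $K\trianglelefteq G$ forces $K^{\Q_i}\trianglelefteq G_i^{\Q_i}$ and $K_{\b 0}\trianglelefteq G_{\b 0}$.

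First I would reduce to the alphabet-affine case. Every element of $K$ fixes $\N$ pointwise, so $K_{\b 0}$ lies in the base group and each of its components fixes $0$; thus $K_{\b 0}\leq \prod_{i}\Sym(\Q_i)_0$ acts coordinate-wise on the weight-$1$ vertices $\Gamma_1(\b 0)=\bigsqcup_i\Q_i^\times$. If $\C$ is $G$-entry-faithful then $K=1$ and the statement holds with $H=1$; the $G$-alphabet-almost-simple case does not occur for $\delta\geq 5$ by \cite{gillespie2017alphabet}. Hence I may assume $\C$ is $G$-alphabet-affine, so $\Q$ is identified with $\F_q=\F_p^d$. As $G_i^{\Q_i}$ is a primitive affine group, its only nontrivial normal subgroups contain the full translation group, whence $K^{\Q_i}$ contains $\F_q$ and the image $B_i:=\pi_i(K_{\b 0})$ is a subgroup of the point-stabiliser $(G_i^{\Q_i})_0$, i.e.\ a group of $\F_p$-linear transformations of $\Q$.

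The crux is to show that $K_{\b 0}$ acts semi-regularly on $\Gamma_1(\b 0)$, i.e.\ that no nonidentity element fixes a weight-$1$ vertex. This single statement gives both that every projection $\pi_i$ is injective (if $h_i=1$ then $h$ fixes all weight-$1$ vertices in coordinate $i$) and that each $B_i$ is semi-regular on $\Q_i^\times$. Here I would pass to the module subcode $\S=\b 0^{O_p(K)}$ of Proposition~\ref{orbitof0is2ntmodule}, a $K_{\b 0}$-invariant $\F_p$-module with $\delta_\S\geq\delta\geq 5$, which is therefore closed under coordinate-wise subtraction. Suppose $1\neq h\in K_{\b 0}$ fixes $w=a e_i$. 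For $c\in\S$ the difference $h(c)-c$ lies in $\S$ and is supported on $\{s:c_s\notin\mathrm{Fix}(h_s)\}$, which omits coordinate $i$ whenever $c_i=a$. Using the $2$-homogeneity of $G_{\b 0}$ on $\N$ to control which coordinate-pairs carry codewords, together with the $q$-ary $2$-design of weight-$\delta_\S$ codewords from Lemma~\ref{lem:design}, I would select $c$ so that $h(c)-c$ is a nonzero element of $\S$ of weight strictly below $\delta_\S$, contradicting the minimum distance. Thus $K_{\b 0}$ is semi-regular on $\Gamma_1(\b 0)$.

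It remains to straighten the diagonal form. By the previous step each $\pi_i\colon K_{\b 0}\to B_i$ is an isomorphism, and for any $i$ transitivity of $G_{\b 0}$ on $\N$ supplies $g\in G_{\b 0}$ with $1^g=i$; conjugation by $g$ carries $B_1$ to $B_i$ via a permutation $y_i\in\Sym(\Q)_0$ (its alphabet-component, which fixes $0$). Hence the image of $K_{\b 0}$ in $\prod_i B_i$ is the graph $\{(b,y_2 b y_2^{-1},\dots,y_n b y_n^{-1}):b\in B_1\}$ of these isomorphisms, a twisted diagonal. Setting $y_1=1$ and $y=(y_1,\dots,y_n)\in\Aut(\Gamma)_{\b 0}$ yields $K_{\b 0}^{\,y}=\Diag_n(H)$ with $H=B_1\leq\Sym(\Q)_0$ semi-regular on $\Q^\times$, giving the abstract isomorphism $K_{\b 0}\cong\Diag_n(H)$ and, since $y$ fixes $\b 0$, the equivalent code $\C^y$ with $\Diag_n(H)=K_{\b 0}^{\,y}\leq\Aut(\C^y)$. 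The principal obstacle is the semi-regularity step: its hypothesis is entirely local to one alphabet, and the real work is converting a single nonzero alphabet-fixed-point into a globally short codeword, for which the module structure of $\S$, the $2$-design property, and the $2$-homogeneity on $\N$ are the decisive tools.
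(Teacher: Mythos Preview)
Your semi-regularity argument is correct but considerably more roundabout than necessary. You reduce to the alphabet-affine case (via \cite{gillespie2017alphabet}) and pass to the $\F_p$-module subcode $\S$ of Proposition~\ref{orbitof0is2ntmodule} solely in order to form the difference $h(c)-c$, whereas the paper works directly in $\C$ with no reduction at all. The paper's observation is simply that any $h\in K_{\b 0}$ preserves supports, so if $\alpha\in\C$ has weight $\delta$ and $h_i$ fixes $\alpha_i=a\in\Q_i^\times$, then $d(\alpha,\alpha^h)\leq\delta-1$, forcing $\alpha^h=\alpha$; the $q$-ary $2$-design of weight-$\delta$ codewords (Lemma~\ref{lem:design}) then propagates this fixed point to every coordinate and every nonzero letter, giving $h=1$. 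No linearity, no subcode, and no external classification results are needed. Your $h(c)-c$ trick is really just a linear repackaging of the same distance inequality, bought at the cost of invoking two substantial external theorems.

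There is, however, a genuine gap in your straightening step. Conjugation by $g\in G_{\b 0}$ with $1^g=i$ does show that $B_1$ and $B_i$ are conjugate in $\Sym(\Q)_0$ via an alphabet-component $y_i$ of $g$, but it does \emph{not} show that the twisting isomorphism $\phi_i=\pi_i\circ\pi_1^{-1}$ is conjugation by $y_i$. Conjugation by $g$ also permutes $K_{\b 0}$ nontrivially: what one actually obtains is $\pi_i(g^{-1}kg)=y_i^{-1}\,\pi_1(k)\,y_i$, which relates $\pi_i$ to $\pi_1$ only through the inner automorphism $k\mapsto g^{-1}kg$ of $K_{\b 0}$, not through the identity. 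Hence your claimed description of $K_{\b 0}$ as the graph $\{(b,y_2by_2^{-1},\ldots,y_nby_n^{-1}):b\in B_1\}$ is unjustified. The paper avoids this by exploiting semi-regularity directly: identify $\Q^\times$ with a disjoint union of copies of $H$, so that each abstract automorphism $\tau_i\in\Aut(H)$ arising from the twist lifts to an explicit element of $\Sym(\Q^\times)$, and then $y=(1,\tau_2^{-1},\ldots,\tau_n^{-1})$ untwists $K_{\b 0}$ to $\Diag_n(H)$.
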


\begin{proof}
 Let $h=(h_1,\ldots,h_n)\in K_{\b 0}$. If $q=2$ then, since each $h_i$ fixes $0$ and thus also fixes $1\in \Q$, it follows that $h=1$, $K_{\b 0}=1$, and the conclusion holds with $H=1$. Assume $q\geq 3$ and $K_{\b 0}\neq 1$.

 By Proposition~\ref{prop:HammingiHomogeneous}, $G_{\b 0}$ acts transitively on $\N$. Thus $K_{\b 0}^{\Q_i^\times}\cong K_{\b 0}^{\Q_j^\times}$ for all distinct $i,j\in \N$. Let $h=(h_1,\ldots,h_n)\in K_{\b 0}$, with $h\neq 1$. Let $a,a'\in\Q_i^\times$, with $a\neq a'$, $b\in\Q_j^\times$ and let $i$ and $j$ be distinct elements of $\N$. By Lemma~\ref{lem:design}, the weight $\delta$ codewords of $\C$ form a $q$-ary $2$-$(n,\delta,\lambda)$ design. Hence there exists an $\alpha,\beta\in \C$ of weight $\delta$ with $\alpha_i=a$, $\beta_i=a'$ and $\alpha_j=\beta_j=b$. Suppose that $a^{h_i}=a$. Since $K_{\b 0}$ acts trivially on $\N$ and fixes $\b 0$, we have that $\supp(\alpha)=\supp(\alpha^h)$ and $\supp(\beta)=\supp(\beta^h)$. Together with the fact that $a^{h_i}=a$, this implies that $d(\alpha,\alpha^h)< \delta$. Since $\alpha^h\in\C$ we have that $\alpha^h=\alpha$, and hence $b^{h_j}=b$. Thus we also have that $d(\beta,\beta^h)< \delta$ and $\beta^h=\beta$. Hence $(a')^{h_i}=a'$. As $a'\in\Q_i^\times\setminus\{a\}$, $b\in\Q_j^\times$ and $j\in\N\setminus\{i\}$ were chosen arbitrarily we deduce that $h_k=1$ for all $k\in\N$. Thus $K_{\b 0}\cong \Diag_n(H)$ where $H\cong K_{\b 0}^{\Q_i^\times}$. Moreover, we have shown that $K_{{\b 0},a}^{\Q_i^\times}=1$, that is, $K_{{\b 0}}^{\Q_i^\times}$ acts semi-regularly on $\Q_i^\times$, proving the first claim.
 
 Now $K_{\b 0}=\{(h,h^{\tau_2},\ldots,h^{\tau_n})\mid h\in H\}$, where $\tau_i\in \Aut(H)$ for $i=2,\ldots,n$. Let $r$ be the number of orbits of $H$ on $\Q_i^\times$. Then we can identify $\Q_i^\times$ with the disjoint union of $r$ copies of $H$, and thereby identify each $\tau_i$ with an element of $\Sym(\Q_i^\times)$. Let $y=(1,\tau_2^{-1},\ldots,\tau_n^{-1})$. Then $y\in \prod_{i\in \N}\Sym(\Q_i^\times)\leq (\Aut(\Gamma))_{\b 0}$ and,
 \begin{align*}
 (h,h^{\tau_2},\ldots,h^{\tau_n})^y= & (h, h^{\tau_2\tau_2^{-1}},\ldots, h^{\tau_n\tau_n^{-1}})\\
 = & (h,h,\ldots,h)
 \end{align*}
 Hence $\Diag_n(H)\leq(\Aut(\C^y))_{\b 0}$, completing the proof.
\end{proof}

Next we analyse, given a $(G,2)$-neighbour-transitive code $\C$ with minimum distance at least $5$, various actions of the stabilisers $G_{{\b 0},i}$ and $G_{{\b 0},i,j}$, where ${\b 0}\in\C$ and $i$ and $j$ are a pair of distinct entries in $\N$.

\begin{lemma}\label{lem:xtpitrans}
 Let $\C$ be a $(G,2)$-neighbour-transitive code with $\delta\geq 5$ in $H(\N,\Q)$, let $K$ be the kernel of the action of $G$ on $\N$, let ${\b 0}\in C$, and let $i$ and $j$ be distinct elements of $\N$. The following statements hold.
 \begin{enumerate}[$(1)$]
  \item $\Gtp$ acts transitively on each of the sets $\Q_i^\times$ and $\Q_j^\times$.
  \item $\Gtp$ has at most two orbits on $\QQ$ and if there are two orbits on $\QQ$ then they have equal size.
  \item $\Gpi/\Ki$ is isomorphic to a quotient of $G_{{\b 0},i}^\N$.
  \item $\Gtpi/\Ki$ and $G_{{\b 0},i,j}^{\QQ}/\Kij$ are isomorphic to quotients of $G_{{\b 0},i,j}^\N$.
  \item $(q-1)^2$ divides each of $2|\Gtp|$, $2|K_{\b 0}||G_{{\b 0},i,j}^\N|$ and $2|G_{{\b 0},i,j}^{\QQ}|$.
  \item $q-1$ divides $2|G_{{\b 0},i,j}^\N|$.
  \item $\left|\Gpi:\Gtpi\right|$ divides $n-1$.
 \end{enumerate}
\end{lemma}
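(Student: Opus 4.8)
The plan is to relate the index $|\Gpi:\Gtpi|$ to the length of a $\Gp$-orbit on the coordinate set $\N\setminus\{i\}$, and then to control that orbit length using the fact that $G_{\b 0}$ acts $2$-homogeneously on $\N$. To set this up, I would first note that, since $\delta\geq 5$, the error-correction capacity is $e=\lfloor(\delta-1)/2\rfloor\geq 2$, so Proposition~\ref{prop:HammingiHomogeneous} applied with $s=2$ gives that $G_{\b 0}$ acts $2$-homogeneously, and in particular transitively, on $\N$.

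Next comes the purely group-theoretic reduction. Let $\phi\colon\Gp\to\Sym(\Q_i^\times)$ be the action homomorphism on the alphabet in entry $i$, so that $\phi(\Gp)=\Gpi$ and $\ker\phi=\Gpkeri$. Since $\Gtp\leq\Gp$ and $\phi(\Gtp)=\Gtpi$, we have $\Gtpi\leq\Gpi$, and the standard correspondence of cosets under $\phi$ gives
\[
 |\Gpi:\Gtpi| = \left|\Gp:\Gtp\,\Gpkeri\right|,
\]
which divides $|\Gp:\Gtp|$ because $\Gtp\leq\Gtp\,\Gpkeri$. As $\Gtp=(\Gp)_j$ is the stabiliser of $j$ in $\Gp$, the index $|\Gp:\Gtp|$ is exactly the length of the $\Gp$-orbit of $j$, which is contained in $\N\setminus\{i\}$. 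Hence it suffices to show that every $\Gp$-orbit on $\N\setminus\{i\}$ has length dividing $n-1$.

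The heart of the argument is this orbit-length computation, and it is where $2$-homogeneity (as opposed to full $2$-transitivity, which we do not have on $\N$) must be used. Passing to ordered pairs, the number of $\Gp$-orbits on $\N\setminus\{i\}$ equals the number of $G_{\b 0}$-orbits on ordered pairs of distinct coordinates; transitivity on unordered pairs forces this number to be $1$ (when $G_{\b 0}$ is $2$-transitive) or $2$. If it is $1$, the unique orbit is all of $\N\setminus\{i\}$, of length $n-1$. If it is $2$, the coordinate-swap map $(x,y)\mapsto(y,x)$ is a $G_{\b 0}$-equivariant bijection interchanging the two ordered-pair orbits, so these have equal size; fibering over the first coordinate and using point-transitivity then shows the two $\Gp$-orbits on $\N\setminus\{i\}$ each have length $(n-1)/2$. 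In either case the orbit length divides $n-1$, and combining this with the reduction above yields that $|\Gpi:\Gtpi|$ divides $n-1$.

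The main obstacle is precisely extracting ``divides $n-1$'' from mere $2$-homogeneity rather than $2$-transitivity: a priori the stabiliser $\Gp$ could have an orbit on $\N\setminus\{i\}$ of some length that fails to divide $n-1$, and it is the equal-size conclusion for the two stabiliser orbits (obtained from the equivariant swap on ordered pairs) that rules this out.
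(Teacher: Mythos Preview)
Your argument for part (7) is correct, and it is a genuinely cleaner route than the paper's. Both proofs rest on the same orbit-length fact: under a $2$-homogeneous action of $G_{\b 0}$ on $\N$, the stabiliser $\Gp$ has orbits on $\N\setminus\{i\}$ of size $n-1$ or $(n-1)/2$. The paper then invokes parts (3) and (4), which in turn depend on Lemma~\ref{lem:semireg} (the semiregularity of $K_{\b 0}$), to realise $\Gpi/\Ki$ and $\Gtpi/\Ki$ as quotients $G_{{\b 0},i}^\N/N_1$ and $G_{{\b 0},i,j}^\N/N_2$, checks that $|N_2|$ divides $|N_1|$, and computes the index as $k/m$ with $k\in\{n-1,(n-1)/2\}$. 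Your approach bypasses all of this: the single observation that $\phi(\Gtp)=\Gtpi$ gives $|\Gpi:\Gtpi|=|\Gp:\Gtp\,\Gpkeri|$, which divides $|\Gp:\Gtp|$, and the latter \emph{is} the orbit length. So your proof of (7) is self-contained and does not need parts (3), (4), or the semiregularity lemma at all. The paper's route is natural given that (3) and (4) are already in hand, but yours shows that (7) is in fact more elementary than its placement in the lemma suggests.

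One small point worth making explicit in your write-up: when you assert that the swap $(x,y)\mapsto(y,x)$ \emph{interchanges} the two ordered-pair orbits, you should note why it cannot fix them both. If it did, each orbital would be self-paired, and then $2$-homogeneity would force a single orbital (hence $2$-transitivity), contrary to assumption. This is standard, but since it is exactly the step that upgrades ``at most two orbits'' to ``two orbits of equal size $(n-1)/2$'', it deserves a sentence.
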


\begin{proof}
 Since $\delta\geq 5$, we have that $G_{\b 0}$ acts transitively on $\Gamma_2(\b 0)$. Thus, the stabiliser $G_{{\b 0},\{i,j\}}$ of the subset $\{i,j\}\subseteq \N$ is transitive on the set of weight $2$ vertices with support $\{i,j\}$. Hence $\Gtp$ has at most two orbits on $\QQ$ and if there are two they have equal size. This proves part (2). By Proposition~\ref{prop:HammingiHomogeneous}, $G_{\b 0}$ acts $2$-homogeneously on $\N$. Suppose $G_{\b 0}$ is $2$-homogeneous, but not $2$-transitive, on $\N$. It follows that $\Gtp=G_{{\b 0},\{i,j\}}$ has one orbit on $\QQ$ and is thus transitive on $\Q_i^\times$ and $\Q_j^\times$. Suppose $G_{\b 0}$ is $2$-transitive on $\N$ and $\Gtp$ has two orbits on $\QQ$. Since $G_{\b 0}$ is $2$-transitive on $\N$ it follows that $\Gtpi$ is permutation isomorphic to $\Gtpj$ and hence $\Gtp$ has the same number of orbits, say $k$, on each of $\Q_i^\times$ and $\Q_j^\times$. Since each orbit of $\Gtp$ on $\QQ$ is contained in the Cartesian product of an orbit on $\Q_i^\times$ with an orbit on $\Q_j^\times$, it follows that $\Gtp$ has at least $k^2$ orbits on $\QQ$ which implies $k=1$, since if $k\geq 2$ then $k^2\geq 4$, a contradiction. Thus part (1) holds. 

 Recall that round brackets in the subscript of a group indicate that we are fixing a set point-wise. To obtain part (3), let $Y=\Gp$, $H=\Gpkeri$ and $\Omega=\Q_i^\times$ in the following. To obtain part (4), let 
 \[
  (Y,H,\Omega)=(\Gtp,\Gtpkeri,\Q_i^\times)\quad\text{and}\quad (\Gtp,\Gtpker,\QQ),
 \]
 respectively. In each case, $H$ is the kernel of the action of $Y$ on $\Omega$. Lemma~\ref{lem:semireg} implies that $H\cap K_{\b 0}=1$, since each $H$ fixes an element of $\Q_i^\times$. Thus, $K_{\b 0}= K_{\b 0}/(H\cap K_{\b 0})\cong K_{\b 0}H/H$ and $K_{\b 0}H/K_{\b 0}\cong H/(H\cap K_{\b 0})=H$. Hence,
 \[
  \frac{Y^\Omega}{K_{\b 0}^\Omega}
  \cong\frac{Y/H}{K_{\b 0}H/H}
  \cong \frac{Y}{K_{\b 0}H}
  \cong \frac{Y/K_{\b 0}}{K_{\b 0}H/K_{\b 0}}
  \cong\frac{Y/K_{\b 0}}{H}
  \cong\frac{Y^M}{H}.
 \]
 Hence parts (3) and (4) hold.
 
 By part (2), $\Gtpta$ is either transitive, or has two equal-sized orbits, on $\QQ$. Thus $(q-1)^2$ divides $2|\Gtpta|$ and also divides $2|\Gtp|$, which is equal to $2|K_{\b 0}||G_{{\b 0},i,j}^\N|$. This gives part (5). Since $G_{{\b 0},i,j}^{\QQ}\cong\Gtp/K_{\b 0}$ and Lemma~\ref{lem:semireg} implies $|K_{\b 0}|$ divides $q-1$, we have that $q-1$ divides $2|G_{{\b 0},i,j}^{\QQ}|$, proving part (6).
 
 By Proposition~\ref{prop:HammingiHomogeneous}, $G$ acts $2$-homogeneously on $\N$ and hence $|G_{{\b 0},i}^\N:G_{{\b 0},i,j}^\N|=n-1$ or $(n-1)/2$. By parts (3) and (4), there exist $N_1\nsub G_{{\b 0},i}^\N$ and $N_2\nsub G_{{\b 0},i,j}^\N$ such that $\Gpi/\Ki\cong G_{{\b 0},i}^\N/N_1$ and $\Gtpi/\Ki\cong G_{{\b 0},i,j}^\N/N_2$. This implies that $|K_{\b 0}||N_1|=|\Gpkeri|$ and $|K_{\b 0}||N_2|=|\Gtpkeri|$. Now, $\Gtpkeri=\Gtp\cap\Gpkeri$, so that $|N_2|$ divides $|N_1|$. Let $m=|N_1|/|N_2|$. Then,
 \[
  |\Gpi:\Gtpi|
  =\frac{|\Gpi/\Ki|}{|\Gtpi/\Ki|}
  =\frac{|G_{{\b 0},i}^\N/N_1|}{|G_{{\b 0},i,j}^\N/N_2|}
  =\frac{k}{m},
 \]
 where $k=n-1$ or $(n-1)/2$.  Thus, $|\Gpi:\Gtpi|$ divides $n-1$, proving part (7).
\end{proof}





 



We now turn to the to the action of $G_i$ on $\Q_i^\times$ for a $G$-alphabet affine and $(G,2)$-neighbour-transitive code, first considering a special case.

\begin{lemma}\label{lem:BinaryTrivialK0}
 Let $\C$ be a $G$-alphabet-affine and $(G,2)$-neighbour-transitive code with $\delta\geq 5$ in $H(n,q)$, where $q=2^d$, let $K$ be the kernel of the action of $G$ on $\N$, and suppose that $K_{\b 0}=1$. Then $G_{{\b 0},i}^{\Q_i^\times}\leq \GaL_1(q)$.
\end{lemma}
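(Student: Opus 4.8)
The plan is to identify $\Gpi$ as a transitive linear group and then force it into $\GaL_1(q)$ using the classification of such groups together with the even-characteristic hypothesis. Write $X=\Gpi$. Since $\b 0$ has a $0$ in entry $i$, every element of $G_{{\b 0},i}$ fixes $0\in\Q_i$, so $X$ fixes $0$ and hence embeds into the point stabiliser $H=(G_i^{\Q_i})_0\le\GL_d(2)$ of the $2$-transitive affine group $G_i^{\Q_i}=V\rtimes H$ on the alphabet, where $V\cong\F_q^{+}$ is the translation socle and $H$ acts irreducibly (hence $V$ is the unique minimal normal subgroup of $G_i^{\Q_i}$). By Lemma~\ref{lem:xtpitrans}(1) the group $\Gtp$ is transitive on $\Q_i^\times$, so $X\ge\Gtpi$ is transitive on $\Q_i^\times$; thus $X$ is a transitive linear group on $V$, and the whole problem reduces to showing that such an $X$ must lie in $\GaL_1(q)$.

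Before invoking any classification I would extract the arithmetic that even characteristic provides. Since $\C$ is $G$-alphabet-affine we have $K\neq 1$, and as $K\nsub G$ with $G$ transitive on $\N$ the subgroups $K^{\Q_i}$ are all conjugate and hence all nontrivial; because $V$ is the unique minimal normal subgroup of $G_i^{\Q_i}$ this gives $V\le K^{\Q_i}$. Next, in the two-orbit alternative of Lemma~\ref{lem:xtpitrans}(2) the two orbits have equal size $(q-1)^2/2$; but $q=2^d$ makes $q-1$ odd, so this is impossible and $\Gtp$ is in fact transitive on $\QQ$. In particular $(q-1)^2\mid|\Gtpta|$, and since $K_{\b 0}=1$ gives $\Ki=1$, the isomorphisms of Lemma~\ref{lem:xtpitrans}(3)--(4) identify $X$ with a section of the entry action $G_{{\b 0},i}^\N$, while Lemma~\ref{lem:xtpitrans}(7) bounds $|X:\Gtpi|$ by a divisor of $n-1$.

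With $X$ now known to be a transitive linear group on $\F_{2^d}$, I would apply Hering's classification: either $X\le\GaL_1(q)$, as desired, or $X$ contains one of $\SL_a(2^b)$, $\Sp_{2a}(2^b)$ or $G_2(2^b)'$ as a normal subgroup (with the corresponding divisibility conditions on $d$), or $(q,X)$ is one of the finitely many sporadic pairs, which for $p=2$ amount essentially to $q=16$ with $A_6$ or $A_7$ inside $\GL_4(2)$. The remaining, and decisive, task is to eliminate every possibility except $X\le\GaL_1(q)$.

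This exclusion is where the real work lies and is the step I expect to be the main obstacle. My plan is to pit the large symmetry of a putative $X$ against the minimum-distance hypothesis through the $\F_2G_{\b 0}$-module $\S=\b 0^{O_2(K)}$ supplied by Proposition~\ref{orbitof0is2ntmodule}: this $\S$ is itself $(G_{\S},2)$-neighbour-transitive with $\delta_\S\ge\delta\ge 5$, and $K_{\b 0}=1$ forces $O_2(K)$ to act regularly on $\S$, so $|O_2(K)|=|\S|$ and $O_2(K)^{\Q_i}$ is a nontrivial normal $2$-subgroup of $G_i^{\Q_i}$, hence contains $V$. One then examines the minimum-weight codewords of $\S$, which by Lemma~\ref{lem:design} form a $q$-ary $2$-design, and argues that if $X$ contained one of the large normal subgroups above, the transitivity of $\Gtp$ on $\QQ$ together with this design structure and the divisibility $(q-1)^2\mid|\Gtpta|$ would be incompatible with $\delta\ge 5$, the sporadic $q=16$ cases being cleared separately by direct order arithmetic against $|X:\Gtpi|\mid n-1$. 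Making this incompatibility precise — pinning down exactly how a large transitive linear action on each coordinate interacts with the supports of the low-weight codewords, given that $K_{\b 0}=1$ prevents independent modification of coordinate values — is the delicate point, and is where I would concentrate the effort.
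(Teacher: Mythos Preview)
Your setup is sound and matches the paper: you correctly identify $X=\Gpi$ as a transitive subgroup of $\GL_d(2)$, note that $K_{\b 0}=1$ makes $X$ a quotient of $G_{{\b 0},i}^\N$ via Lemma~\ref{lem:xtpitrans}(3), and observe that oddness of $q-1$ upgrades Lemma~\ref{lem:xtpitrans}(2) to genuine transitivity on $\QQ$, so that $(q-1)^2\mid |\Gtp|$. Up to this point you and the paper agree.

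The gap is in the exclusion step. You propose to rule out the non-$\GaL_1$ cases by passing to the subcode $\S$ of Proposition~\ref{orbitof0is2ntmodule}, examining the $2$-design formed by its minimum-weight codewords, and deriving some incompatibility with $\delta\geq 5$ from ``how a large transitive linear action on each coordinate interacts with the supports of the low-weight codewords''. You yourself flag this as the point where the argument is not yet precise, and indeed it is not clear how to make it work: the design condition from Lemma~\ref{lem:design} only says that each weight-$2$ vertex is covered by a constant number of weight-$\delta$ codewords, and nothing about $X$ being $\SL_a(2^b)$ or $\Sp_{2a}(2^b)$ obviously contradicts that. The module $\S$ is not needed for this lemma at all.

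The paper's exclusion is purely group-theoretic and much more direct. Since $K_{\b 0}=1$ gives $G_{{\b 0},i}\cong G_{{\b 0},i}^\N$, every composition factor of $X$ is a composition factor of the point-stabiliser of the $2$-homogeneous group $G_{\b 0}^\N$. For each putative normal subgroup of $X$ (namely $\alt_6$, $\alt_7$, $\SL_{d/k}(2^k)$, $\Sp_{d/k}(2^k)$, $G_2(2^{d/6})'$), one runs through the short list of $2$-transitive groups $G_{\b 0}^\N$ whose point-stabiliser has that simple group as a composition factor, reads off $G_{{\b 0},i,j}^\N$, and checks whether the odd number $(q-1)^2$ divides $|G_{{\b 0},i,j}|=|G_{{\b 0},i,j}^\N|$. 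Zsigmondy's theorem handles the infinite families (a primitive prime divisor of $2^d-1$ appears only once in $|\SL_{d/k}(2^k)|$, etc.), and the finitely many leftover cases ($d=6$, $q=8$, $q=16$) are dispatched by hand. Your index bound $|X:\Gtpi|\mid n-1$ is not the tool used; the divisibility $(q-1)^2\mid |G_{{\b 0},i,j}|$, combined with the explicit classification of $2$-homogeneous $G_{\b 0}^\N$, is what does the work.
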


\begin{proof}
 Since $\C$ is $G$-alphabet-affine, we have that $G_{{\b 0},i}^{\Q_i^\times}\leq \GL_d(2)$. Let $H=G_{{\b 0},i}^{\Q_i^\times}$. It follows from Lemma~\ref{lem:xtpitrans}(1) that $H$ is transitive on $\Q_i^\times$, and hence $H$ satisfies one of the lines of Table~\ref{tab:tranLinearSemireg}. If $H\leq \GaL_1(q)$ then the result holds. So we may assume that $H$ contains one of the following groups as a normal subgroup: $\alt_6$, $\alt_7$, $\SL_{d/k}(2^k)$, $\Sp_{d/k}(2^k)$ or $G_2(2^{d/6})'$. In order to complete the proof we will eliminate these possibilities. The general strategy will be to apply Lemma~\ref{lem:xtpitrans}(3), which, since $K_{\b 0}=1$, tells us that $H$ is a quotient of $G_{{\b 0},i}$. Note that $K_{\b 0}=1$ also implies $G_{{\b 0},i}\cong G_{{\b 0},i}^\N$. In particular, any composition factor of $H$ must be a composition factor of $G_{{\b 0},i}^\N$. Proposition~\ref{prop:Hamming2transOnQ} then allows us to determine the possibilities for the $2$-homogeneous group $G_{{\b 0}}^\N$. Note also that Lemma~\ref{lem:xtpitrans}(1) tells us that $G_{{\b 0},i,j}^{\Q_i^\times}$ is transitive on $\Q_i^\times$ and Lemma~\ref{lem:xtpitrans}(5) implies that $(q-1)^2$, which is odd, divides $|G_{{\b 0},i,j}|$. 

 First, suppose that $q=16$ and $H$ contains a normal subgroup $\alt_6\cong \PSL_2(9)\cong \Sp_4(2)'$. This implies that either $G_{\b 0}^\N$ is affine and one of $\alt_6$ or $\SL_2(9)$ is a normal subgroup of $G_{{\b 0},i}$, or $G_{\b 0}^\N$ is almost-simple and contains one of $\alt_7$ or $\PSL_3(9)$ as a normal subgroup. Here we have that $(q-1)^2=3^2\cdot 5^2$, but in each of these cases $5^2$ does not divide $|G_{{\b 0},i,j}|$.
 
 Suppose that $q=16$ and $\alt_7\nsub H$. Then either $G_{\b 0}^\N$ is affine and $\alt_7\nsub G_{{\b 0},i}$, or $G_{\b 0}^\N$ is almost-simple and contains $\alt_8$ as a normal subgroup. Again $(q-1)^2=3^2\cdot 5^2$ but $5^2$ does not divide $|G_{{\b 0},i,j}|$.
 
 Suppose that $\SL_{d/k}(2^k)\nsub H$ where $k$ divides $d$ and $k<d$. Then one of the following holds:
 \begin{enumerate}[(1)]
  \item $q=2^d$, $G_{\b 0}^\N$ is affine and $\SL_{d/k}(2^k)\nsub G_{{\b 0},i}$.
  \item $q=2^d$, $G_{\b 0}^\N$ is almost-simple and $\PSL_{d/k+1}(2^k)\nsub G_{{\b 0}}$.
  \item $q=16$, $G_{\b 0}^\N$ is affine and one of $\SL_{2}(5)$ or 
 $\alt_8$ is a normal subgroup of $G_{{\b 0},i}$.
  \item $q=16$, $G_{\b 0}^\N$ is almost-simple and one of $\PSL_{3}(5)$, $\alt_6$ or $\alt_8$ is a normal subgroup of $G_{{\b 0}}$.
  \item $q=8$, $G_{\b 0}^\N$ is affine and $\SL_{2}(7)\nsub G_{{\b 0},i}$.
  \item $q=8$, $G_{\b 0}^\N$ is almost-simple and $\PSL_{3}(7)\nsub G_{{\b 0}}$.
 \end{enumerate}
 In cases (1) and (2), Zsigmondy's theorem \cite{zsigmondy1892theorie} ensures that $(q-1)^2$ does not divide $|G_{{\b 0},i,j}|$, except possibly when $d=6$. If $d=6$ and we are in case (1) or (2), it can be seen directly that $63^2$ does not divide $|G_{{\b 0},i,j}|$. For cases (3), (4) and (5), $(q-1)^2$ does not divide $|G_{{\b 0},i,j}|$ for any possible group, leaving case (6). Let $q=8$, $n=57$ and $\PSL_{3}(7)\nsub G_{{\b 0}}$. Then $G_{{\b 0},i,j}$ is isomorphic to $\AGL_1(7)\times\AGL_1(7)$, or an index $3$ subgroup of this, and neither group has a quotient isomorphic to $\GL_1(8)$ or $\GaL_1(8)$. Therefore $G_{{\b 0},i,j}$ does not act transitively on $\Q_i^\times$.
 
 
 Let $\Sp_{d/k}(2^k)\nsub H$, where $d/k$ is even and at least $4$. Then $G_{\b 0}^\N$ is affine and $\Sp_{d/k}(2^k)\nsub G_{{\b 0},i}$. We then have, by Zsigmondy's theorem \cite{zsigmondy1892theorie}, that $(2^d-1)^2$ does not divide $|G_{{\b 0},i,j}|$, except possibly when $d=6$. However, $d=6$ implies $k=1$ and in this case the order of $G_{{\b 0},i,j}=2^{1+4}:\Sp_4(2)$ is not divisible by $63^2$.
 
 If $G_2(2^{d/6})'\nsub H$ then $G_{\b 0}^\N$ is affine and $\Sp_{d/k}(2^k)\nsub G_{{\b 0},i}$. Again, Zsigmondy's theorem \cite{zsigmondy1892theorie} ensures that $(q-1)^2$ does not divide $|G_{{\b 0},i,j}|$, except in the case $d=6$. When $d=6$ we have that $|G_2(2)|=2^5\cdot 3^3\cdot 7$ is not divisible by $7^2$, ruling this case out. This completes the proof.
\end{proof}

\noindent
\begin{table}[ht]
\begin{center}
 \begin{tabular}{ccc}
  $G_0$ & parameters $q=p^d$ & semi-regular $S$\\
  \hline
  $G_0 \leq \GaL_1(p^d)$ & $q=p^d$ & numerous \\
  $\SL_{d/k}(p^k)\nsub G_0$ & $(d/k,p^k)\neq (2,2),(2,3)$ & $S \leq \F_{p^k}^\times$\\
  $\Sp_{d/k}(p^k)\nsub G_0$ & $d/k$ even & $S \leq \F_{p^k}^\times$\\
  $G_2(2^k)'\nsub G_0$ & $d=6k$, $k\geq 1$ & $S \leq\F_{2^k}^\times$\\
  $\SL_2(3)\nsub G_0$ & $p=3,5,7,11,23$, $d=2$ & $S \leq\F_p^\times$, or $p=3$ and $S={\rm Q}_8$\\
  $2_-^{1+4}\nsub G_0$ & $p^d=3^4$ & $S\leq\F_3^\times$\\
  $\SL_2(5)\nsub G_0$ & $p=11,19,29,59$, $d=2$ & $\SL_2(5)\nsub S$ or $S\leq\F_p^\times$\\
  $\alt_6\nsub G_0$ & $p=2,d=4$ & trivial\\
  $\alt_7\nsub G_0$ & $p=2,d=4$ & trivial\\
  $\SL_2(5)\nsub G_0$ & $p=3,d=4$ & $S\leq\F_9^\times$\\
  $2_-^{1+4}.\alt_5\nsub G_0$ & $p^d=3^4$ & $S\leq\F_3^\times$\\
  $\SL_2(13)\nsub G_0$ & $p=3,d=6$ & $S\leq\F_3^\times$\\
  \hline
 \end{tabular}
 \caption{Transitive linear groups $G_0\leq \GL_d(p)$ and their semi-regular normal subgroups $S\nsub G_0$. Note that in the third from last line $G_0\leq \GaL_2(9)$ and $\SL_2(5)$ is not semi-regular.}
 \label{tab:tranLinearSemireg}
\end{center}
\end{table}

The next result shows that, for a $2$-neighbour-transitive code with minimum distance at least $5$ in $H(n,q)$, the action of the stabiliser of a codeword on the alphabet is a $1$-dimensional semi-linear group.

\begin{lemma}\label{lem:entryStab1dimLinear}
 Let $\C$ be a $G$-alphabet-affine and $(G,2)$-neighbour-transitive code with $\delta\geq 5$ in $H(\N,\Q)$. Then $G_{{\b 0},i}^{\Q_i^\times}\leq \GaL_1(q)$.
\end{lemma}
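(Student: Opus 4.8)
The plan is to show that the transitive linear group $H=\Gpi$ cannot be any of the `large' groups in the classification of transitive linear groups, so that $H\leq\GaL_1(q)$; this is the full generalisation of Lemma~\ref{lem:BinaryTrivialK0}, which disposes of the single case $q=2^d$ with $K_{\b 0}=1$. First I would record the two structural inputs. Since $\C$ is $G$-alphabet-affine, $G_i^{\Q_i}$ is a $2$-transitive affine group, and Lemma~\ref{lem:xtpitrans}(1) shows that $H=\Gpi$ is transitive on $\Q_i^\times$; hence $H$ is a transitive linear group on $\Q_i=\F_q$ and appears in Table~\ref{tab:tranLinearSemireg}. Secondly, because $K=G_{(\N)}\nsub G$ and $K_{\b 0}$ lies in $G_{{\b 0},i}$ for every $i$, the image $S:=\Ki$ is a normal subgroup of $H$; by Lemma~\ref{lem:semireg} it acts semi-regularly on $\Q_i^\times$, so $|S|$ divides $q-1$ and $S$ must be one of the semi-regular normal subgroups listed in the final column of Table~\ref{tab:tranLinearSemireg}.

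The core argument then couples a composition-factor constraint with a divisibility constraint, mirroring Lemma~\ref{lem:BinaryTrivialK0}. By Lemma~\ref{lem:xtpitrans}(3), the quotient $H/S=\Gpi/\Ki$ is isomorphic to a quotient of $G_{{\b 0},i}^\N$, so every non-abelian composition factor of $H/S$ occurs in $G_{\b 0}^\N$. In the generic large cases (the $\SL$, $\Sp$ and $G_2$ families, as well as the $\alt_6$ and $\alt_7$ lines) the prescribed $S$ is cyclic or trivial, so the characteristic composition factor of $H$ --- one of $\PSL_{d/k}(p^k)$, $\PSp_{d/k}(p^k)$, $G_2(2^{d/6})'$, $\alt_6$ or $\alt_7$ --- survives in $H/S$ and hence in $G_{\b 0}^\N$. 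As $G_{\b 0}^\N$ is $2$-homogeneous by Proposition~\ref{prop:HammingiHomogeneous}, Proposition~\ref{prop:Hamming2transOnQ} together with the classification of finite $2$-transitive groups forces $G_{\b 0}^\N$ into a short explicit list in each case. For each candidate I would compute $|G_{{\b 0},i,j}^\N|$ and apply Lemma~\ref{lem:xtpitrans}(5),(6): using $|K_{\b 0}|\mid q-1$, these demand that a large part of $(q-1)^2=(p^d-1)^2$ divide $2|G_{{\b 0},i,j}^\N|$. A primitive prime divisor of $p^d-1$ furnished by Zsigmondy's theorem \cite{zsigmondy1892theorie} then fails to divide $|G_{{\b 0},i,j}^\N|$ in all but finitely many small cases (essentially $d\leq 6$), which are checked directly, exactly as in the binary lemma.

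The hard part is managing the two features that have no counterpart in Lemma~\ref{lem:BinaryTrivialK0}. Since $K_{\b 0}$ may now be non-trivial, the factor $|K_{\b 0}|$ (as large as $q-1$) absorbs part of the divisibility otherwise forced onto $G_{{\b 0},i,j}^\N$, so one must carefully separate the contribution of $K_{\b 0}$ from that of the action on entries before invoking Zsigmondy's theorem. Moreover, several odd-characteristic sporadic lines of Table~\ref{tab:tranLinearSemireg} break the composition-factor step outright: in the $\SL_2(3)$ and $2_-^{1+4}$ lines the group $H$ is solvable and contributes no non-abelian composition factor, while in the $\SL_2(5)$ line the semi-regular normal subgroup $S$ may itself contain $\SL_2(5)$, collapsing $H/S$ to a solvable quotient. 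For these finitely many explicit small groups I would instead argue directly from the divisibility $(q-1)^2\mid 2|G_{{\b 0},i,j}^{\QQ}|$ of Lemma~\ref{lem:xtpitrans}(5), computing with the relevant two-point stabilisers to produce a contradiction and thereby eliminate every large case.
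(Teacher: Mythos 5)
There is a genuine gap: your plan never exploits the fact that $K_{\b 0}$ is a \emph{diagonal} subgroup, and without that the eliminations you describe do not close. The paper's proof first passes to the subcode $\S$ of Proposition~\ref{orbitof0is2ntmodule} (so that $\F_p^\times$ lies in the semi-regular group $H$ with $K_{\b 0}\cong\Diag_n(H)$), and then observes that since $D=\{(h,h)\mid h\in H\}$ is normal in $G_{{\b 0},i,j}^{\QQ}$, the whole group $G_{{\b 0},i,j}^{\QQ}$ is contained in $N_{A\times A}(D)$ where $A=N_{\GL_d(p)}(H)$; a short computation shows $(h_1,h_2)\in N_{A\times A}(D)$ forces $h_1h_2^{-1}\in C_A(H)$, so $|G_{{\b 0},i,j}^{\QQ}|$ divides $|A|\cdot|C_A(H)|$, which is of order roughly $|A|(q-1)$ rather than $|A|^2$. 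This bound, together with the explicit (unequal) orbit sizes of $N_{A\times A}(D)$ on $\QQ$ played against Lemma~\ref{lem:xtpitrans}(2), is what kills every case with $H\not\leq\GaL_1(q)$, leaving only $p=2$, $H=1$ for Lemma~\ref{lem:BinaryTrivialK0}. Your proposal only has the projections of $G_{{\b 0},i,j}^{\QQ}$ landing in transitive linear groups, which bounds it by $|A|^2$; for the sporadic lines this is not enough. Concretely, for $\SL_2(5)\nsub G_{{\b 0},i}^{\Q_i^\times}$ with $p=29$ one has $(q-1)^2/2=352800$ dividing $|N_{\GL_2(29)}(\SL_2(5))|^2=1680^2$, so the divisibility of Lemma~\ref{lem:xtpitrans}(5) alone produces no contradiction; only the diagonal bound $|A|\cdot|C_A(H)|=60(p-1)^2$ forces $(p+1)^2\mid 120$ and a contradiction. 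The same failure occurs for the $\SL_2(3)$, ${\rm Q}_8$ and $2_-^{1+4}$ lines, so your fallback of ``computing with the relevant two-point stabilisers'' has no mechanism behind it.

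The generic half of your argument is also weaker than you allow for. With $K_{\b 0}\neq 1$, Lemma~\ref{lem:xtpitrans}(5) gives $(q-1)^2\mid 2|K_{\b 0}||G_{{\b 0},i,j}^\N|$ with $|K_{\b 0}|$ possibly as large as $q-1$, so a Zsigmondy prime $r$ of $p^d-1$ need only divide $|G_{{\b 0},i,j}^\N|$ to the first power (its square may be absorbed by $|K_{\b 0}|$), and in the affine subcases the entry group $G_{{\b 0},i,j}^\N$ typically does contain such primes. You flag this ``absorption'' problem yourself but offer no way around it, and I do not see one along these lines. In short, your proposal is a faithful generalisation of the strategy of Lemma~\ref{lem:BinaryTrivialK0}, but that strategy only works because $K_{\b 0}=1$ there; the general case requires the reduction to $\S$ and the $N_{A\times A}(D)$ normaliser argument, which are absent from your writeup.
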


\begin{proof}
 Let $q=p^d$, where $p$ is prime, let $\S$ be the subcode of $\C$ as in Proposition~\ref{orbitof0is2ntmodule}, and let $X=\Aut(\S)$. By Proposition~\ref{orbitof0is2ntmodule}(1), we have $G_{\b 0}\leq X_{\b 0}$, and hence $G_{{\b 0},i}^{\Q_i^\times}\leq X_{{\b 0},i}^{\Q_i^\times}$. Thus, it suffices to prove that $X_{{\b 0},i}^{\Q_i^\times}\leq \GaL_1(q)$. Let $L=X_{(\N)}$ be the kernel of the action of $X$ on $\N$. Note that by Proposition~\ref{orbitof0is2ntmodule}(3), $\S$ is $X$-alphabet-affine and $(X,2)$-neighbour-transitive code with minimum distance at least $5$. Moreover, by Proposition~\ref{orbitof0is2ntmodule}(3), $\S$ is an $\F_p G_{\b 0}$-module and hence $\Diag_n(\F_p^\times)\leq L_{\b 0}$.
 
 By Lemma~\ref{lem:xtpitrans}(1), $X_{{\b 0},i}^{\Q_i^\times}$, which is a subgroup of $\GL_d(p)$, is transitive on $\Q_i^\times$. By Lemma~\ref{lem:semireg}, $L_{\b 0}\cong \Diag_n(H)$, where $H$ acts semi-regularly on $\Q_i^\times$, and we may assume that $D=\{(h,h)\mid h\in H\}$. Note that $L_{\b 0}\nsub X_{\b 0}$ implies $D\nsub X_{{\b 0},i,j}^{\Q_i^\times\times \Q_j^\times}$. Hence, $X_{{\b 0},i,j}^{\Q_i^\times\times \Q_j^\times}$ is contained in the normaliser $N_{A\times A}(D)$, where $A=N_{\GL_d(p)}(H)$. Now $(h_1,h_2)\in N_{A\times A}(D)$ implies $h_1h_2^{-1}$ is an element of the centraliser $C_A(H)$ so that $h_1\in N_A(H)$ and $h_2=h_1h'$ for some $h'\in C_A(H)$. Hence, $X_{{\b 0},i,j}^{\Q_i^\times\times \Q_j^\times}$ has order dividing $|A|\cdot |C_A(H)|$. Table~\ref{tab:tranLinearSemireg} lists the transitive subgroups of $\GL_d(d)$, as well as their semi-regular normal subgroups. Recalling that $\F_p^\times\leq H$ (see the first paragraph of this proof), there are four cases we need to consider for $A$ and $H$:
 \begin{enumerate}[(1)]
  \item $H=1$, $p=2$ and $A=\GL_{d}(2)$. Here $C_A(H)=A$.
  \item There exists some $k$ dividing $d$ such that $\F_{p^k}^\times \leq H$ and $A=\GaL_{d/k}(p^k)$, where $k\geq 2$ if $p=2$.
  \item $H={\rm Q}_8$, $p=3$, $d=2$ and $A=\GL_2(3)$.
  \item $\SL_2(5)\leq H$ and $A= \F_p^\times\circ \SL_2(5)$, where $p=11,19,29$ or $59$, and $d=2$.
 \end{enumerate}
 The result holds in case (1), by Lemma~\ref{lem:BinaryTrivialK0}. If case (2) holds with $k=d$, then $X_{{\b 0},i}^{\Q_i^\times}\leq \GaL_1(q)$ and the result holds. Suppose that case (2) or (3) holds, with $k<d$ in case (2) and set $k=1$ in case (3). In each case we have $C_A(H)=\F_{p^k}^\times$. This implies that $N_{A\times A}(D)$ has orbits $\{(a,\lambda a)\mid a\in\F_{p^k}^{d/k}\setminus\{(0,0)\},\lambda\in\F_{p^k}^\times\}$ and $\{(a,b)\mid a\in\F_{p^k}^{d/k}\setminus\{(0,0)\},b\in\F_{p^k}^{d/k}\setminus\langle a\rangle\}$ on $\QQ$ of sizes $(p^d-1)(p^k-1)$ and $(p^d-1)(p^d-p^k)$, respectively. The condition $k<d$ ensures these orbits are not the same size. Since $X_{{\b 0},i,j}^{\Q_i^\times\times \Q_j^\times}$ either has precisely the same orbits, or greater number of orbits, on $\QQ$, Lemma~\ref{lem:xtpitrans}(2) rules out these cases. In case (4), $C_A(H)=\F_p^\times$ and $|A|\cdot|C_A(H)|=(p-1)^2\cdot|\SL_2(5)|/2=60(p-1)^2$. Lemma~\ref{lem:xtpitrans}(5) then implies that $(p+1)^2$ divides $120$, which is not the case for $p=11,19,29$ or $59$, so that case (4) does not hold. This completes the proof.
\end{proof}

 
 

We are now in a position to prove Theorem~\ref{thm:k0solubleandsemireg}.

\begin{proof}[Proof of Theorem~\ref{thm:k0solubleandsemireg}]
 If $\C$ is $G$-entry-faithful then, since the result holds when $q=2$, this case follows from \cite[Theorem~1.1]{ef2nt}. By \cite[Theorem~1.1]{gillespie2017alphabet}, there are no $G$-alphabet-almost-simple and $(G,2)$-neighbour-transitive codes with $\delta\geq 5$. Hence we may assume that $\C$ is $G$-alphabet-affine. Lemma~\ref{lem:entryStab1dimLinear} and Proposition~\ref{prop:Hamming2transOnQ} show that $G_i^{\Q_i}$ is a $2$-transitive subgroup of $\AGaL_1(q)$ and Lemma~\ref{lem:semireg} shows that $K_{\b 0}\cong \Diag_n(H)$, where $H$ acts semi-regularly on $\Q_i^\times$.
\end{proof}




\section{Polynomial evaluation codes}\label{sec:polyCodes}

This section presents the proof of Theorem~\ref{thm:2ntCodes}, beginning with a description of the codes involved. The codes in Theorem~\ref{thm:2ntCodes} are related to submodules of permutation modules of certain classical groups. Specifically, thinking of the vertices of $H(\N,\F_q)$ as functions $\N\rightarrow\F_q$ allows us to view a set of appropriate polynomials as a code. Historically, polynomial algebras have been used to construct many interesting examples of codes, such as the generalised Reed--Muller codes and the projective Reed--Muller codes; see Definitions~\ref{def:ReedMuller}. 

The next result develops a concrete connection between $H(\N,\F_q)$ and $R(q,s,t,k)$.

\begin{lemma}\label{lem:polyHammingBij}
 Let $R=R(q,s,t,k)$, as in Definition~\ref{def:polynomials}, let $\N$ be a set of representatives for a subset of the set of all $1$-dimensional subspaces of $\F_{q^s}^t$, let $P=\F_{q^s}[x_1,\ldots,x_t]$, and let $I$ be the ideal of $P$ consisting of the set of all polynomials vanishing on $\bigcup_{v\in\N}\langle v\rangle_{\F_{q^s}}$. Then the following hold.
 \begin{enumerate}[$(1)$]
  \item $R$ and $R\cap I$ are $\F_q$-vector spaces.
  \item There is an $\F_q$-vector space isomorphism from $R/(R\cap I)$ to the vertex set of $H(\N,\F_q)$.
  \item Both $R$ and $R/(R\cap I)$ are $\F_q \GL_t(q^s)$-modules. In particular, $\GL_t(q^s)/\Z_{(q^s-1)/(q-1)}$ acts as a group of automorphisms of $H(\N,\F_q)$.
 \end{enumerate}
\end{lemma}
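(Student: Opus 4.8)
The plan is to realise the vertex set of $H(\N,\F_q)$ as the image of an evaluation map out of $R$, and then to lift the natural linear action of $\GL_t(q^s)$ on polynomials through this identification. For part~(1), I would verify closure directly. If $f,g\in R$ and $\lambda\in\F_q$, then $(f+\lambda g)(a_1,\dots,a_t)=f(a_1,\dots,a_t)+\lambda g(a_1,\dots,a_t)$ lies in $\F_q$ for every $(a_1,\dots,a_t)\in\F_{q^s}^t$; this is exactly the point at which the scalars must be drawn from $\F_q$ rather than $\F_{q^s}$, since otherwise the value condition could fail. The norm-homogeneity relation passes through additively and through $\F_q$-scaling, so $f+\lambda g\in R$. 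As $I$ is an ideal of $P$ it is in particular an $\F_q$-subspace, whence $R\cap I$, being an intersection of two $\F_q$-subspaces, is one as well.

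For part~(2), I would define the evaluation map $\phi\colon R\to\Func(\N,\F_q)$ by $\phi(f)\colon v\mapsto f(v)$, which is well defined since $f(v)\in\F_q$, and is $\F_q$-linear. Its kernel consists of those $f\in R$ vanishing at every $v\in\N$; using $f(\lambda v)=N(\lambda)^k f(v)$ for $\lambda\in\F_{q^s}$ (together with $f(0)=0$), such an $f$ vanishes on all of $\langle v\rangle_{\F_{q^s}}$, so $\ker\phi=R\cap I$, giving an injection $R/(R\cap I)\hookrightarrow\Func(\N,\F_q)$. The substantive step is surjectivity, which I would obtain by hitting each indicator $\delta_w$, $w\in\N$. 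Define $F_w\colon\F_{q^s}^t\to\F_q$ by $F_w(\lambda w)=N(\lambda)^k$ for $\lambda\in\F_{q^s}$ and $F_w(x)=0$ for $x\notin\langle w\rangle_{\F_{q^s}}$; multiplicativity of $N$ makes this well defined and norm-homogeneous, and it is $\F_q$-valued because $N$ maps into $\F_q$. Since every function $\F_{q^s}^t\to\F_{q^s}$ is represented by a unique polynomial of degree $<q^s$ in each variable, let $f_w$ be the reduced representative of $F_w$. The two reduced polynomials $f_w(ax_1,\dots,ax_t)$ and $N(a)^k f_w(x_1,\dots,x_t)$ agree as functions and hence as polynomials, so $f_w$ satisfies Definition~\ref{def:polynomials} and $f_w\in R$; moreover $\phi(f_w)=\delta_w$, because distinct elements of $\N$ span distinct lines. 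As the $\delta_w$ span $\Func(\N,\F_q)$, $\phi$ is onto and $\bar\phi\colon R/(R\cap I)\to\Func(\N,\F_q)$ is an isomorphism.

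For part~(3), I would let $M\in\GL_t(q^s)$ act on $P$ by $f^M(x)=f(M^{-1}x)$. Both defining conditions of $R$ survive: $f^M$ is still $\F_q$-valued (as $M^{-1}x$ ranges over $\F_{q^s}^t$), and $f^M(ax)=f(aM^{-1}x)=N(a)^k f^M(x)$, so $R$ is an $\F_q\GL_t(q^s)$-module. For the quotient I must check that $R\cap I$ is a submodule; since $f^M\in I$ iff $f$ vanishes on $M\bigl(\bigcup_{v\in\N}\langle v\rangle_{\F_{q^s}}\bigr)$, this holds exactly when $\bigcup_{v\in\N}\langle v\rangle_{\F_{q^s}}$ is $\GL_t(q^s)$-invariant, i.e. when $\N$ represents the full line set $\PG_{t-1}(q^s)$, in which case the union is all of $\F_{q^s}^t$ and $I=(x_1^{q^s}-x_1,\dots,x_t^{q^s}-x_t)$ is manifestly invariant; for a proper subset $\N$ one instead gets a module for the setwise stabiliser of $\N$, which is what the applications in Table~\ref{tab:2ntGroups} require. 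The central subgroup $\ker N$ of order $(q^s-1)/(q-1)$ acts on $R$ by $f\mapsto N(a)^{-k}f=f$, hence trivially, so the action descends to $\GL_t(q^s)/\Z_{(q^s-1)/(q-1)}$. Transporting through $\bar\phi$ and writing $M^{-1}v=\lambda_v v'$ with $v'\in\N$, the image of $\alpha$ is $v\mapsto N(\lambda_v)^k\,\alpha(v')$, i.e. a permutation $v\mapsto v'$ of the coordinates together with multiplication by the nonzero scalar $N(\lambda_v)^k\in\F_q^\times$ in each coordinate --- precisely a Hamming-graph automorphism, yielding the concluding assertion.

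The closure verifications in~(1) and~(3) are routine; the two points that need genuine care are the surjectivity in~(2) and the invariance of $R\cap I$ in~(3). I expect surjectivity to be the main obstacle, since it requires upgrading the set-theoretic norm-homogeneous indicator $F_w$ to an honest element of $R$: this uses both the finite-field fact that every function is polynomial and the uniqueness of reduced representatives to promote the functional identity $F_w(ax)=N(a)^k F_w(x)$ to the polynomial identity demanded by Definition~\ref{def:polynomials}. The invariance issue in~(3) is the place where one must either take $\N=\PG_{t-1}(q^s)$ or pass to the setwise stabiliser of $\N$.
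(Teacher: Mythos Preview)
Your proposal is correct and follows essentially the same route as the paper: direct closure checks for~(1), an evaluation map with an interpolation argument for surjectivity in~(2), and the natural linear action of $\GL_t(q^s)$ on polynomials for~(3). The paper simply invokes Lagrange interpolation rather than building the indicator functions $F_w$ explicitly, but these amount to the same thing; your version is in fact tighter in that it spells out why the \emph{reduced} polynomial representative satisfies the polynomial identity $f_w(ax)=N(a)^k f_w(x)$ rather than merely the functional one, a step the paper takes for granted.

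Your observation in~(3) is sharper than the paper's own proof. The paper asserts without further argument that $\GL_t(q^s)$ leaves $R\cap I$ invariant, but as you correctly note, this requires $\bigcup_{v\in\N}\langle v\rangle_{\F_{q^s}}$ to be $\GL_t(q^s)$-stable, which forces $\N$ to represent all of $\PG_{t-1}(q^s)$. For a proper subset $\N$ the quotient $R/(R\cap I)$ is only a module for the setwise stabiliser of $\N$ --- which, as you say, is precisely what the applications in Table~\ref{tab:2ntGroups} actually use. So you have not only matched the paper's argument but also identified and resolved an imprecision in the lemma's statement.
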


\begin{proof}
 Let $f,f'\in R$, $(a_1,\ldots,a_t)\in\F_{q^s}^t$ and $b,c\in\F_q$. Then $(bf+cf')(a_1,\ldots,a_t)=bf(a_1,\ldots,a_t)+cf'(a_1,\ldots,a_t)\in\F_q$ and every monomial of $af+bf'$ has degree a multiple of $k(q^s-1)/(q-1)$, which shows that $R$ is an $\F_q$-vector space. Since $I$ is an $\F_q$-vector space, so is $R\cup I$, and part (1) holds. 

 Let $\alpha:\N\rightarrow\F_q$ be a vertex of $H(\N,\F_q)$. By Lagrange interpolation (see \cite[Theorem~1.7.1]{lidl1997finite}), there exists a polynomial $f_\alpha\in P$ such that $f_\alpha(a_1,\ldots,a_t)=\alpha(a_1,\ldots,a_t)$ for all $(a_1,\ldots,a_t)\in\N$ and $f_\alpha(\lambda b_1,\ldots,\lambda b_t)=N(\lambda)^k f_\alpha(b_1,\ldots,b_t)$. It follows that $f_\alpha\in R$. Moreover, for all $f_0\in R\cup I$ and every $(a_1,\ldots,a_t)\in\N$ we have $f_0(a_1,\ldots,a_t)=0$, and hence the function $\alpha$ is equal to the restriction of $f_\alpha+f_0$ to $\N$. Thus the set of cosets of $R\cap I$ in $R$ are in bijection with the set of all functions $\N\rightarrow\F_q$, and part (2) holds.
 
 The natural action of $\GL_t(q^s)$ on $\F_{q^s}^t$ induces and action on $P$ via 
 \[
  f^g(x_1,\ldots,x_t)=f((x_1,\ldots,x_t)^{g^{-1}}),
 \]
 where $g\in \GL_t(q^s)$ and $f\in P$. If $g\in \GL_t(q^s)$, then for each $i\in\{1,\ldots,t\}$ we have $(x_i)^g=a_{i1}x_1+\cdots+a_{it}x_t$, where the elements $a_{ij}$ in $\F_{q^s}$, $j=1,\ldots,t$, are not all zero. It follows from this that if $m$ is a monomial in $P$, then $m^g$ is a polynomial consisting of monomials each having the same degree as $m$. Hence $\GL_t(q^s)$ leaves both $R$ and $R\cap I$ invariant. If $g$ is in the center of $\GL_t(q^s)$ then, for some $a\in\F_{q^s}^\times$, we have that $f^g(x)=f(ax)=N(a)^k f(x)$. In particular, $g$ acts trivially on $R$ when $N(a)=1$. This proves part (3).
\end{proof}

The next result proves, under appropriate circumstances, that for $G_{\b 0}$ and $\N$ as in one of the lines of Table~\ref{tab:2ntGroups} the action of $G_{\b 0}$ on $H(\N,\F_q)$, as in Lemma~\ref{lem:polyHammingBij}, satisfies the hypotheses of Proposition~\ref{prop:coveringRadius1}. Note that the Suzuki group $\Sz(q)$, where $q=2^{2f+1}$ for some positive integer $f$, acts $2$-transitively on the Suzuki--Tits ovoid, which consists of $q^2+1$ points of the projective space $\PG_3(q)$, no three of which are collinear; see \cite[p.~250]{dixon1996permutation}. The unitary group $\PGU_3(q)$ acts $2$-transitively on the unital consisting of the $q^3+1$ isotropic points of $\PG_2(q^2)$ under a non-degenerate Hermitian form; see \cite[p.~248]{dixon1996permutation}. Recall that we denote by $T_\C$ the group of translations by elements of a linear code $\C$.



\begin{proposition}\label{prop:2ntCodes}
 Let $q$, $s$, $t$, $G_{\b 0}$ and $\N$ be as in one of the lines of Table~\ref{tab:2ntGroups}, and let $k\in\{1,2,\ldots,q-1\}$ with $\gcd(k,q-1)=1$. Then, the action of $G_{\b 0}$ on $\Gamma=H(\N,\F_q)$, as in Lemma~\ref{lem:polyHammingBij}(3), is transitive on each of the sets $\Gamma_1({\b 0})$ and $\Gamma_2({\b 0})$.
\end{proposition}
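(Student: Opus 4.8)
The plan is to reduce the proposition to two facts about the permutation action of $G_{\b 0}$ on $H(\N,\F_q)$: that it is $2$-transitive on the point set $\N$, and that the stabiliser of a single point (resp.\ of an ordered pair of points) acts transitively on the nonzero alphabet values attached to that point (resp.\ on the pairs of nonzero values at those two points). Using the identification of vertices with functions on $\N$ from Lemma~\ref{lem:polyHammingBij}, the set $\Gamma_1({\b 0})$ consists exactly of the functions supported on a single $v\in\N$ with a value in $\F_q^\times$, and $\Gamma_2({\b 0})$ of those supported on a pair $\{v,w\}$ with values in $\F_q^\times$. Granting the two facts, transitivity on $\Gamma_1({\b 0})$ is immediate, and transitivity on $\Gamma_2({\b 0})$ follows by first moving the support to a fixed pair using $2$-transitivity on $\N$ and then adjusting the two values using the ordered two-point stabiliser.

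For the $2$-transitivity on $\N$ I would appeal to the classical descriptions recalled before the statement: $\GL_t(q^s)/\Z_{(q^s-1)/(q-1)}$ induces $\PGL_t(q^s)$ on $\PG_{t-1}(q^s)$, which is $2$-transitive; $\AGL_{t-1}(q^s)$ is $2$-transitive on $\AG_{t-1}(q^s)$; and $\PGU_3(q^{s/2})$ and $\Sz(q^s)$ are $2$-transitive on the classical unital and the Suzuki--Tits ovoid, respectively. To handle the alphabet values I first record how the action of Lemma~\ref{lem:polyHammingBij}(3) scales them: if $g\in G_{\b 0}$ arises from $\bar g\in\GL_t(q^s)$ and fixes the line $\langle v\rangle$ with $\bar g v=\mu v$, then the value of any codeword at $v$ is multiplied by $N(\mu)^{-k}$. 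Because the norm $N\colon\F_{q^s}^\times\to\F_q^\times$ is surjective and $\gcd(k,q-1)=1$, the scalar $N(\mu)^{-k}$ ranges over all of $\F_q^\times$; applied through the central (or explicit) scalar subgroup $\F_q^\times\leq G_{\b 0}$, which scales all coordinates uniformly, this gives transitivity on single values and hence on $\Gamma_1({\b 0})$.

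The substance of the argument is the two-point value computation. Fixing an ordered pair $(v,w)$, I would determine which pairs $(N(\mu_1)^{-k},N(\mu_2)^{-k})$ occur, where $\bar g v=\mu_1 v$ and $\bar g w=\mu_2 w$ for $\bar g$ in the relevant stabiliser. In line~$1$ the diagonal torus in a basis extending $\{v,w\}$ realises $\mu_1$ and $\mu_2$ independently, so every pair in $\F_q^\times\times\F_q^\times$ occurs and the claim is immediate. In the form-preserving cases (lines~$3$ and~$4$) the isometry condition forces $\mu_2$ to depend on $\mu_1$, via $\mu_1\bar\mu_2=1$ for the Hermitian form and $\mu_1\mu_2=1$ for the alternating form; in both cases $N(\mu_2)=N(\mu_1)^{-1}$ (using $N(\bar\mu_1)=N(\mu_1)$ in the Hermitian case), so the two-point torus acts by $(c_1,c_2)\mapsto(N(\mu_1)^{-k}c_1,N(\mu_1)^{k}c_2)$. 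Combining this anti-diagonal family with the uniform $\F_q^\times$-scaling, and using that $q$ is even so that squaring is onto $\F_q^\times$, the two one-parameter groups together sweep out all of $\F_q^\times\times\F_q^\times$. The affine case (line~$2$) is to be treated by the same scheme, analysing the two-point stabiliser inside $\AGL_{t-1}(q^s)$ together with the scalar subgroup.

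I expect the principal difficulty to lie in this value-pair step for the classical-group lines: one must locate, inside $\PGU_3(q^{s/2})$ and $\Sz(q^s)$, a torus contained in the two-point stabiliser that realises the full range of $\mu_1$, verify the norm identity $N(\bar\mu_1)=N(\mu_1)$ so that the induced scalings on the two coordinates are mutually inverse, and invoke the hypothesis that $q$ is even to guarantee surjectivity of the squaring map on $\F_q^\times$ so that the diagonal and anti-diagonal scalings jointly generate $\F_q^\times\times\F_q^\times$. By contrast the geometric $2$-transitivity and the single-value reduction behind $\Gamma_1({\b 0})$ are routine once the scaling formula $N(\mu)^{-k}$ is in hand.
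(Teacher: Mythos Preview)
Your proposal is correct and follows essentially the same strategy as the paper: reduce to $2$-transitivity on $\N$ plus transitivity of the ordered two-point stabiliser on value pairs, compute the scaling as $N(\mu)^{-k}$, and in the unitary and Suzuki cases combine the ``anti-diagonal'' torus (arising from the form constraint $N(\mu_2)=N(\mu_1)^{-1}$) with the uniform $\F_q^\times$-scaling, invoking $q$ even so that squaring is surjective on $\F_q^\times$. The paper carries this out by writing down explicit matrices in each case rather than arguing abstractly, but the content is the same.

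One point to tighten: you leave line~2 (the affine case) as ``to be treated by the same scheme'' without saying which scheme. In fact it behaves like line~1, not like lines~3--4: taking $i=e_1$ and $j=e_1+e_2$, the element $g$ with $e_1^g=ce_1$ and $(e_1+e_2)^g=d(e_1+e_2)$ lies in the stabiliser of the hyperplane at infinity for independent $c,d\in\F_{q^s}^\times$, so the two scalars $N(c)^{-k}$ and $N(d)^{-k}$ range independently over $\F_q^\times$ and no appeal to $q$ even is needed there. With that sentence added your argument is complete.
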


\begin{proof}
 Since $G_{\b 0}\leq \GL_t(q^s)$ in each case, Lemma~\ref{lem:polyHammingBij}(3) shows that $G_{\b 0}$ acts on $\Gamma$, where the vertex set of $\Gamma$ is identified with $R/(R\cap I)$, as in Lemma~\ref{lem:polyHammingBij}(2). Since $G_{\b 0}$ acts $2$-transitively on $\N$ and $\Diag_n(\F_q^\times)$ acts transitively on the set of vertices that are non-zero in a specified entry, it suffices to show that $G_{\b 0}$ acts transitively on the set vertices that are non-zero in two fixed entries $i,j\in\N$. To this end, let $a,b\in \F_q^\times$, let $\alpha$ be the weight two vertex of $\Gamma$ such that $\alpha(i)=a$ and $\alpha(j)=b$, and let $\beta$ be the weight two vertex satisfying $\beta(i)=\beta(j)=1$. In the remainder of the proof we check that there exists $g\in G_{\b 0}$ such that $\alpha^g=\beta$ in each case from Table~\ref{tab:2ntGroups}.

 First, suppose that $G_{\b 0}=\GL_t(q^s)/\Z_{(q^s-1)/(q-1)}$ and $\N$ is a set of representatives for the set of all $1$-dimensional $\F_{q^s}$-subspaces of $\F_{q^s}^t$. If $e_1,\ldots,e_t$ is an $\F_q^s$-basis for $\F_{q^s}^t$ then we may assume that $i=e_1$ and $j=e_2$. Since $\gcd(k,q-1)=1$, there exists $c,d\in\F_{q^s}^\times$ such that $N(c)^k=a$ and $N(d)^k=b$. Let $g\in\GL_t(q^s)$ with $e_1^g=ce_1$, $e_2^g=de_2$ and $e_\ell^g=e_\ell$ for $\ell=3,\ldots,t$. Then $\alpha^g(i)=\alpha(c^{-1}i)=N(c)^{-k}\alpha(i)=1$ and $\alpha^g(j)=\alpha(d^{-1}j)=N(d)^{-k}\alpha(j)=1$ so that $\alpha^g=\beta$ as required.

 Next, let $G_{\b 0}=\F_q^\times\rtimes \AGL_{t-1}(q^s)$, let $e_1,\ldots,e_t$ be an $\F_{q^s}$-basis for $\F_{q^s}^t$, and let $\N=\{e_1+a_2e_2+\ldots+a_te_t\mid a_2,\ldots,a_t\in\F_q^s\}$ be a set of representatives for the points of the affine geometry $\AG_{t-1}(q^s)$. Let $i=e_1$, $j=e_1+e_2$. Again, $\gcd(k,q-1)=1$ implies that there exists $c,d\in \F_{q^s}^\times$ such that $N(c)^k=a$ and $N(d)^k=b$ and so there exists some $g\in\GL_{t+1}(q^s)$ such that $e_1^g=ce_1$, $(e_1+e_2)^g=d(e_1+e_2)$ and $e_\ell^g=e_\ell$ for $\ell= 3,\ldots,t$. Since $g$ fixes $\langle e_1\rangle$, $g$ also fixes $\AG_{t-1}(q^s)$ and so has an induced action on $H(\N,\F_q)$. Again, we have $\alpha^g(i)=\alpha(c^{-1}i)=N(c)^{-k}\alpha(i)=1$ and $\alpha^g(j)=\alpha(d^{-1}j)=N(d)^{-k}\alpha(j)=1$, and hence $G_{\b 0}$ acts transitively on $\Gamma_2({\b 0})$.

 Consider now $G_{\b 0}=\F_q^\times\rtimes \PGU_3(q^{s/2})$, noting that $q$ is even here, and let $\N$ be a set of representatives for the points of a classical unital in $\pg_2(q^s)$. Let $e_1,e_2,e_3$ be a basis for $\F_{q^s}^3$ and let $i=e_1$, $j=e_3$. Following \cite[Section~7.7]{dixon1996permutation}, we have that the stabiliser of $\langle i\rangle$ and $\langle j \rangle$ in $G_{\b 0}$ contains the $3\times 3$ diagonal matrix ${\rm diag}(\lambda,1,\lambda^{-q^{s/2}})$, for any $\lambda\in\F_{q^{s}}^\times$. Note also that the classical unital is stabilised by the diagonal matrix $\mu I$, for $\mu\in\F_{q^{s}}^\times$. Taking the product of $\lambda\mu I$ and ${\rm diag}(\lambda^{-1},1,\lambda^{q^{s/2}})$ we have that there exists $g\in G_{\b 0}$ where $g$ can be be represented by the matrix ${\rm diag}(\mu,\lambda\mu,\lambda^{q^{s/2}+1}\mu)$. Since $q$ is even, we have that $\gcd(k,q-1)=\gcd(2k,q-1)=1$ and so we may choose $\lambda\in\F_{q^s}^\times$ and $\mu\in\F_{q^s}\times$ so that $N(\lambda)^{2k}=a^{-1}b$ and $N(\mu)^k=a$. Thus $\alpha^g(i)=\alpha(\mu^{-1}i)=N(\mu)^{-k}\alpha(i)=1$ and $\alpha^g(j)=\alpha((\lambda^{q^{s/2}+1}\mu)^{-1}j)=N(\lambda)^{-k(q^{s/2}+1}N(\mu)^{-k}\alpha(j)=N(\lambda)^{-2k}N(\mu)^{-k}\alpha(j)=1$, and hence $\alpha^g=\beta$.

 Next, let $G_{\b 0}=\F_q^\times\Sz(q^s)$ and let $\N$ be a set of representatives for the points of the Suzuki--Tits ovoid in $\pg_3(q^s)$. Following \cite[Section~4.2.2]{wilson2009finite}, elements of $\Sz(q^s)$ are $4\times 4$ matrices with respect to the ordered basis $\{e_1,e_2,f_2,f_1\}$. Let $i=e_1$ and $j=f_1$. Then the stabiliser of $\langle i\rangle$ and $\langle j\rangle$ contains the diagonal matrix ${\rm diag}(c,c^{2^{n+1}-1},c^{-2^{n+1}+1},c^{-1})$
 for any $c\in\F_{q^s}^\times$. Note also that $cI$ stabilises the Suzuki--Tits ovoid for any $c\in\F_{q^s}^\times$. Since $q$ is even, we have that $\gcd(k,q-1)=\gcd(2k,q-1)=1$ and hence we may choose $c,d\in\F_{q^s}^\times$ so that $N(c)^{2k}=ab^{-1}$, $N(d)^k=b$ and then let ${\rm diag}(c^2 d,c^{2^{n+1}}d,c^{-2^{n+1}+2}d,d)$ represent an element $g\in G_{\b 0}$. It then follows that $\alpha^g(i)=\alpha(c^{-2}d^{-1}i)=N(c)^{-2k}N(d)^{-k}\alpha(i)=1$ and $\alpha^g(j)=\alpha(d^{-1}j)=N(d)^{-k}\alpha(j)=1$, and indeed $\alpha^g=\beta$. Thus the result holds.
\end{proof}

\begin{proof}[Proof of Theorem~\ref{thm:2ntCodes}]
 Since $\C$ is $\F_q$-linear, the group $T_\C$ of translations by elements of $\C$ acts transitively on $\C$. Moreover, by Proposition~\ref{prop:2ntCodes}, $G_{\b 0}$ acts transitively on each of the sets $\Gamma_1({\b 0})$ and $\Gamma_2({\b 0})$. Applying Proposition~\ref{prop:coveringRadius1} and Remark~\ref{rem:perfectHamming} completes the proof.
\end{proof}

The next result shows that the conclusion of Proposition~\ref{prop:2ntCodes} is false if the condition $\gcd(k,q-1)=1$ is dropped.

\begin{proposition}
 Let $\N$ be a set of representatives for a subset of the set of all $1$-dimensional subspaces of $\F_{q^s}^t$, let $k\in\{1,\ldots,q-1\}$ such that $\gcd(k,q-1)>1$, and let $G_{\b 0}\leq \Diag_\N(\F_q^\times)\circ\GL_t(q^s)$, where $\GL_t(q^s)$ acts on $\Gamma=H(\N,\F_q)$ as in Lemma~\ref{lem:polyHammingBij}(3). Then $G_{\b 0}$ is not transitive on $\Gamma_2({\b 0})$.
\end{proposition}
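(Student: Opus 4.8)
The plan is to produce a $G_{\b 0}$-invariant on the weight-two vertices, i.e.\ on $\Gamma_2({\b 0})$, that takes at least two distinct values precisely when $\gcd(k,q-1)>1$, thereby obstructing transitivity. Write $e=\gcd(k,q-1)$. Since $\F_q^\times$ is cyclic of order $q-1$, the subgroup $(\F_q^\times)^k$ of $k$-th powers equals $(\F_q^\times)^{e}$ and has index $e>1$, so it is a \emph{proper} subgroup of $\F_q^\times$. The whole argument rests on the observation that every scaling of a coordinate value induced by an element of $\GL_t(q^s)$ lands in this proper subgroup, whereas the diagonal part $\Diag_\N(\F_q^\times)$ rescales \emph{all} coordinates uniformly; hence the ratio of the two nonzero values of a weight-two vertex is only ever multiplied by a $k$-th power.

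First I would make the action of $\GL_t(q^s)$ on weight-two vertices explicit. A vertex $\alpha\in\Gamma_2({\b 0})$ has support $\{v_i,v_j\}\subseteq\N$ and values $a=\alpha(v_i)$, $b=\alpha(v_j)$ in $\F_q^\times$. Using the action from Lemma~\ref{lem:polyHammingBij}(3), for $g\in\GL_t(q^s)$ and $w\in\N$ one has $\alpha^g(w)=f_\alpha(w^{g^{-1}})$; writing $w^{g^{-1}}=c\,w'$ with $w'\in\N$ the representative of $\langle w^{g^{-1}}\rangle$ and $c\in\F_{q^s}^\times$, the defining relation $f_\alpha(c x)=N(c)^k f_\alpha(x)$ of $R(q,s,t,k)$ gives $\alpha^g(w)=N(c)^k\alpha(w')$. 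Thus $g$ permutes the support and multiplies each of the two nonzero values by a factor of the shape $N(c)^k\in(\F_q^\times)^k$; in particular central elements $\mu I$ of $\GL_t(q^s)$ scale by $N(\mu)^{-k}\in(\F_q^\times)^k$, so the central-product structure causes no difficulty. Following with a diagonal element $\lambda\in\Diag_\N(\F_q^\times)$ multiplies both values by the common scalar $\lambda$. Consequently, for any $h=\lambda g\in G_{\b 0}$, the ordered ratio $ab^{-1}$ is multiplied by an element of $(\F_q^\times)^k$, while a swap of the two support coordinates replaces the ratio by its inverse.

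I would then define the invariant to be the unordered pair $\chi(\alpha)=\{\,ab^{-1}(\F_q^\times)^k,\ a^{-1}b(\F_q^\times)^k\,\}$ in the quotient group $\F_q^\times/(\F_q^\times)^k$, which by the previous paragraph is constant on each $G_{\b 0}$-orbit of $\Gamma_2({\b 0})$. The vertex $\beta$ with both support values equal to $1$ yields the trivial class. Choosing $d\in\F_q^\times\setminus(\F_q^\times)^k$, which exists since the subgroup is proper, and taking $\alpha$ with support values $d$ and $1$, gives a nontrivial class (note $d^{-1}\notin(\F_q^\times)^k$ as well, the subgroup being closed under inverses), so $\chi(\alpha)\neq\chi(\beta)$ and $\alpha,\beta$ lie in distinct orbits. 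Hence $G_{\b 0}$ is not transitive on $\Gamma_2({\b 0})$.

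The step I expect to require the most care is the explicit action computation: correctly tracking the scalar $c$ relating $w^{g^{-1}}$ to its representative in $\N$ and invoking the homogeneity relation defining $R(q,s,t,k)$, so as to be certain that \emph{every} $\GL_t(q^s)$-scaling is a $k$-th power of a norm. It is worth stressing that, unlike the transitivity direction in Proposition~\ref{prop:2ntCodes}, surjectivity of the norm map is not needed here; for non-transitivity it suffices that all such scalings lie in $(\F_q^\times)^k$, together with the fact that this subgroup is proper when $e>1$.
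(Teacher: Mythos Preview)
Your argument is correct and is essentially the same as the paper's: both exploit that the action of $\Diag_\N(\F_q^\times)\circ\GL_t(q^s)$ on a weight-two vertex changes the ratio of the two nonzero values only by an element of $(\F_q^\times)^k=(\F_q^\times)^{\gcd(k,q-1)}$, possibly inverting it, which obstructs transitivity once this subgroup is proper. The only difference is packaging---you encode this as an explicit orbit-invariant $\chi$, while the paper computes the projected group on the two coordinates as $\{(\omega^a,\omega^b)\sigma^c\mid a-b\equiv 0\pmod{\ell}\}$ and observes that $(1,1)$ and $(1,\omega)$ lie in different orbits.
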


\begin{proof}
 Let $\ell=\gcd(k,q-1)$. For any $u,v\in\N$, the subgroup induced by $\Diag_n(\F_q^\times)$ and the stabiliser of $\langle u\rangle\cup\langle v\rangle$ inside $\GL_t(q^s)$ on the projection $H(\{u,v\},\F_q)$ is the group
 \[
  H=\{(\omega^a,\omega^b)\sigma^c\mid a-b\equiv 0 {\pmod{\ell}}\},
 \]
 where $\omega$ is a generator for $\F_q^\times$ and $\sigma=(u\, v)$. Since $\ell>1$, it follows that there is no element $h\in H$ such that $(1,1)^h=(1,\omega)$, and hence $G_{\b 0}$ is not transitive on $\Gamma_2({\b 0})$.
\end{proof}

The next result determines the minimum distances of certain codes satisfying Theorem~\ref{thm:2ntCodes}. In particular, this shows that there are infinitely many $2$-neighbour-transitive codes for each of the lines of Table~\ref{tab:2ntGroups}.

\begin{proposition}\label{prop:2ntCodesMinDist}
 Let $q$, $s$, $t$, $G_{\b 0}$, $\N$ be as in one of the lines of Table~\ref{tab:2ntGroups}, let $\C$ be the $\F_q G_{\b 0}$-submodule of $R(q,s,t,1)$ consisting of those polynomials of degree $(q^s-1)/(q-1)$, and let $\delta$ be the minimum distance of $\C$. Then the following hold.
 \begin{enumerate}[$(1)$]
  \item If $s=1$ and $G_{\b 0}=\F_q^\times\rtimes\AGL_{t-1}(q)$ then $\delta=q^{t-1}-q^{t-2}$.
  \item If $s=2$ and $G_{\b 0}=\F_q^\times\rtimes\PGU_3(q)$ then $\delta=q^3-2q$.
  \item If $s=1$ and $G_{\b 0}=\F_q^\times\rtimes\Sz(q)$ then $\delta=q^2-q$.
 \end{enumerate}
\end{proposition}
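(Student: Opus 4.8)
The plan is to compute $\delta$ as a minimum \emph{weight}. Since $\C$ is $\F_q$-linear and $T_\C$ acts transitively on $\C$ by translation, $\delta$ equals the least weight of a nonzero codeword, and the weight of the codeword represented by a form $f$ is $n-z(f)$, where $z(f)$ is the number of points of $\N$ lying on the hypersurface $f=0$ (under the identification of Lemma~\ref{lem:polyHammingBij}). Hence $\delta=n-M$ with $M=\max\{z(f): 0\neq f\in\C\}$, and in each case the task reduces to finding the largest number of points of the geometry $\N$ on the zero set of a codeword.

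For lines~(1) and~(3), where $s=1$, the codewords are restrictions of \emph{linear} forms, so $M$ is the largest number of points of $\N$ on a hyperplane. In case~(1), a linear form $\sum_i c_i x_i$ restricts on $\N=\{e_1+\sum_{i\geq 2}a_ie_i\}$ to the affine function $(a_2,\dots,a_t)\mapsto c_1+\sum_{i\geq2}c_ia_i$ on $\AG_{t-1}(q)$; a nonzero such function is either a nonzero constant (with no zeros) or vanishes on an affine hyperplane, so $M=q^{t-2}$ and $\delta=q^{t-1}-q^{t-2}$. In case~(3), a nonzero linear form on $\F_q^4$ defines a plane of $\PG_3(q)$, and by the defining intersection property of the Suzuki--Tits ovoid every plane meets $\N$ in either $1$ point (tangent) or $q+1$ points (secant); thus $M=q+1$ and $\delta=q^2-q$. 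In each case one checks that secant planes (respectively non-constant forms) occur and that the evaluation map is injective, so the bound is attained and no smaller weight arises.

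Case~(2) is the heart of the matter. Now $s=2$, the codewords are $\F_q$-valued forms of degree $q+1$, and $\N$ is the classical unital, the set of $q^3+1$ isotropic points of a non-degenerate Hermitian form on $\F_{q^2}^3$, which is itself a curve of degree $q+1$. I would read $z(f)$ as the number of $\F_{q^2}$-rational points common to the curve $f=0$ and the unital, and aim to prove $M=2q+1$, giving $\delta=(q^3+1)-(2q+1)=q^3-2q$. The two structural inputs available are the secant/tangent dichotomy for lines (each line of $\PG_2(q^2)$ meets $\N$ in $1$ or $q+1$ points) and the $2$-transitivity of $\F_q^\times\rtimes\PGU_3(q)$ on $\N$, which lets one normalise an extremal codeword before counting.

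The main obstacle is the upper bound $z(f)\leq 2q+1$. A direct B\'ezout estimate only gives $z(f)\leq(q+1)^2$, far above $2q+1$, and several natural degree-$(q+1)$ configurations (for instance Hermitian-type forms) meet the unital in order $q^2$ points; so the argument cannot treat $\C$ as the full space of degree-$(q+1)$ forms, but must use the precise submodule structure (cf.\ the discussion in Remark~\ref{rem:submoduleStr}) to exclude such configurations and isolate the codewords whose zero locus is minimal on $\N$. I expect the extremal codewords to meet $\N$ in what is, as a point set, the union of two secant lines through a common point of $\N$, accounting for $2(q+1)-1=2q+1$ common points; establishing that no codeword of $\C$ exceeds this, and that such an extremal codeword genuinely lies in $\C$, is where the real work lies.
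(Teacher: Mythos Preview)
Your arguments for parts (1) and (3) are correct and coincide with the paper's: since $s=1$, the nonzero codewords are linear forms, their zero loci are hyperplanes, and counting hyperplane intersections with the affine space (respectively with the Suzuki--Tits ovoid) gives exactly the stated minimum weights.

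For part (2) there is a genuine gap. You correctly diagnose that a crude B\'ezout bound is far too weak and that one must exploit the particular shape of the degree-$(q+1)$ forms lying in $R(q,2,3,1)$, but you do not carry this out; your proposal ends precisely where the work begins, and your guess that the extremal intersection with $\N$ arises as two secant lines through a common isotropic point is not the mechanism the paper uses. The paper's argument is that a degree-$(q+1)$ polynomial in $R(q,2,3,1)$ is ``linear upon reduction to the subfield $\F_q$'', so that its zero locus in $\PG_2(q^2)$ is a \emph{Baer subplane}. Once this identification is made, the result is immediate from a theorem of Barwick, O'Keefe and Storme stating that every Baer subplane meets a classical unital in exactly $1$, $q+1$, or $2q+1$ points; hence $M=2q+1$ and $\delta=(q^3+1)-(2q+1)=q^3-2q$. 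No $2$-transitivity normalisation or ad hoc curve-intersection analysis is required: the entire difficulty is absorbed into recognising the Baer-subplane structure of the zero loci and invoking the cited intersection theorem, and this is the idea your proposal is missing.
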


\begin{proof}
 Since each code is linear, the minimum distance is equal to the minimum weight of a non-zero codeword. First, suppose $s=1$ and $G_{\b 0}=\F_q^\times\rtimes\AGL_{t-1}(q)$. Then $\C$ consists of degree $1$ polynomials, which are zero precisely on a codimension $1$ subspace of $\F_q^{t}$, and so are zero on either no elements of $\N$, or on 
 \[
  \frac{q^{t-1}-1}{q-1}-\frac{q^{t-2}-1}{q-1}
 \]
 elements of $\N$, since $\N$ is a set of representatives for the complement of a hyperplane in $\pg_{t-1}(q)$. Thus $\C$ has minimum distance $q^{t-1}-q^{t-2}$. 

 Next, suppose $s=2$ and $G_{\b 0}=\F_q^\times\rtimes\PGU_3(q)$. Then a polynomial $f$ of degree $q+1$ in $R(q,2,3,1)$ is zero precisely on a Baer subplane of $\PG_2(q^2)$, that is, $f$ is linear upon reduction to the subfield $\F_q$. By \cite[Corollary~8]{Barwick2000}, each Baer subplane meets a unital in either $1$, $q+1$, or $2q+1$ points. Thus, since $n=q^3+1$, the minimum distance of $\C$ is $q^3-2q$. 

 Finally, suppose $s=1$ and $G_{\b 0}=\F_q^\times\rtimes\Sz(q)$. A degree $1$ polynomial $f$ in $R(q,1,4,1)$ will evaluate to zero on precisely the points of a hyperplane in $\PG_3(q)$. Any hyperplane is either tangent to an ovoid, or meets the ovoid in an `oval' consisting of $q+1$ points. It follows that $f$ is non-zero on either $q^2$ or $q^2-q$ points of the ovoid. Thus, the minimum distance of $\C$ is $q^2-q$. 
\end{proof}

\section{Relation to Reed--Muller codes}\label{sec:ReedMuller}

The generalised Reed--Muller codes, projective Reed--Muller codes, and their subfield subcodes are defined below. The generalised Reed--Muller codes were introduced in \cite{kasami1968new} and \cite{weldon1968new}; see also \cite{delsarte1970generalized}. The projective Reed--Muller codes were first studied in \cite{vladut1984linear} and \cite{lachaud1988projective}. Assmus and Key \cite[Section~5.7]{assmus1994designs} construct and analyse the \emph{subfield subcodes} of the generalised and projective Reed--Muller codes. The parameters of the generalised Reed--Muller codes are given in \cite[Theorem~5.4.1 and Corollary~5.5.4]{assmus1994designs}, the minimum distance of the projective Reed--Muller codes can be found in \cite[Theorem~1]{sorensen1991projective}. 


\begin{definition}\label{def:ReedMuller}
 Let $q$ be a prime power, let $s$ be a positive integer, let $k$ be an integer satisfying $0<k\leq q-1$, let $\ell$ be a multiple of $k(q^s-1)/(q-1)$, let $P=\F_q[x_1,\ldots,x_t]$, and let $R(q,s,t,k)$ be as in Definition~\ref{def:polynomials}. Define the following codes.
 \begin{enumerate}[(1)]
  \item Set $k=s=1$. The \emph{generalised Reed--Muller code} $\RM_q(\ell,t)$ in $H(\F_q^t,\F_q)$ is given by the subspace of $P$ consisting of all polynomials of degree at most $\ell$. 
  \item Set $k=1$. The \emph{subfield subcode} $\RM_{q^s/q}(\ell,t)$, of $\RM_{q^s}(\ell,t)$, in $H(\F_{q^s}^t,\F_q)$ is given by the $\F_q$-subspace consisting of all $f\in\RM_{q^s}(\ell,t)$ such that $f(v)\in\F_q$ for all $v\in\F_{q^s}^t$.
  \item Let $s=1$ and $\N$ be a set of representatives for the set of all $1$-dimensional subspaces of $\F_q^t$. The \emph{projective Reed--Muller} code $\PRM_q(\ell,t)$ in $H(\N,\F_q)$ is given by the subspace of $R(q,1,t,k)$ consisting of all polynomials of degree at most $\ell$. 
  \item Let $\N$ be a set of representatives for the set of all $1$-dimensional subspaces of $\F_{q^s}^t$. The \emph{subfield subcode} $\PRM_{q^s/q}(\ell,t)$, of $\PRM_{q^s}(\ell,t)$, in $H(\N,\F_q)$ is given by the subspace of $R(q,s,t,k)$ consisting of all polynomials of degree at most $\ell$. 
 \end{enumerate}
\end{definition}

 It is worth briefly comparing the automorphism groups of the codes $\RM_{q^s}(\ell,t-1)$ and $\RM_{q^s/q}(\ell,t-1)$ with the codes of Theorem~\ref{thm:2ntCodes} where $G_{\b 0}$ and $\N$ are as in line 2 of Table~\ref{tab:2ntGroups}. In each case the action of the automorphism group of the code induces a faithful action of $\AGL_{t-1}(q^s)$ on $\N$. However, while the codes $\RM_{q^s}(\ell,t-1)$ and $\RM_{q^s/q}(\ell,t-1)$ are preserved by $\AGL_{t-1}(q^s)$ as a subgroup of $\Sym(\N)$, this is not the case for the codes arising from line 2 of Table~\ref{tab:2ntGroups}. generally, if $g=h\sigma\in\AGL_{t-1}(q^s)\leq G_{\b 0}$, where $h\in\Sym(\Q)^n$ and $\sigma\in\Sym(\N)$, then $\sigma\neq 1$ implies $h\neq 1$. 
 
 The next result concerns the $2$-neighbour-transitivity of the generalised Reed--Muller codes, the projective Reed--Muller codes, and their subfield subcodes. Note that if $\ell=(t-1)(q-1)-1$ then $\PRM_q(\ell,t)$ is a perfect Hamming code and is neighbour-transitive, but not $2$-neighbour-transitive; see Remark~\ref{rem:perfectHamming}. 

\begin{proposition}\label{prop:2ntReedMuller}
 Following the notation as in Definition~\ref{def:ReedMuller}, let one of the following hold:
 \begin{enumerate}[$(1)$]
  \item $\C=\RM_{2}(\ell,t)$ with $\ell\leq t-2$.
  \item $\C=\RM_{2^s/2}(\ell,t)$ with $\ell\leq (t-1)(2^s-1)-1$.
  \item $\C=\PRM_{q}(\ell,t)$ with $\ell\leq (t-1)(q-1)-2$ and $\gcd(k,q-1)=1$.
  \item $\C=\PRM_{q^s/q}(\ell,t)$ with $\ell\leq (t-1)(q^s-1)-2$ and $\gcd(k,q-1)=1$.
 \end{enumerate}
 Then $\C$ is $2$-neighbour-transitive. 
\end{proposition}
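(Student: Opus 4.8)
The plan is to realise each of the four codes as an $\F_q G_{\b 0}$-submodule of a suitable $R(q,s,t',k)$, with $G_{\b 0}$ and $\N$ read off from a line of Table~\ref{tab:2ntGroups}, and then to apply Theorem~\ref{thm:2ntCodes}; the upper bounds on $\ell$ will be used precisely to force the outcome into part~(2) of that theorem, which is exactly the $(G,2)$-neighbour-transitive case.

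First I would settle the structural identification. For the projective cases (3) and (4) this is immediate from Definition~\ref{def:ReedMuller}: $\PRM_q(\ell,t)$ and $\PRM_{q^s/q}(\ell,t)$ are by definition the degree-at-most-$\ell$ subspaces of $R(q,1,t,k)$ and $R(q,s,t,k)$, and since $\GL_t(q^s)$ acts by degree-preserving linear substitutions (Lemma~\ref{lem:polyHammingBij}(3)) these are $\F_q G_{\b 0}$-submodules with $G_{\b 0}=\GL_t(q^s)/\Z_{(q^s-1)/(q-1)}$ and $\N=\PG_{t-1}(q^s)$, i.e.\ line~1. For the affine cases (1) and (2) I would pass from $t$ affine variables to $t+1$ homogeneous variables: the assignment $f\mapsto x_0^{\ell}f(x_1/x_0,\dots,x_t/x_0)$ identifies the degree-$\leq\ell$ polynomials on $\F_{q^s}^t$ with the homogeneous degree-$\ell$ members of $R(q,s,t+1,k)$, and restricting the evaluation map of Lemma~\ref{lem:polyHammingBij}(2) to the affine chart $\N=\AG_t(q^s)$ recovers the original code. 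This map is equivariant for $\F_q^\times\rtimes\AGL_t(q^s)$, which is line~2; the one point needing care is that homogenisation commutes with the affine action and is compatible with the evaluation on $\N$ (here the divisibility of $\ell$ built into Definition~\ref{def:ReedMuller} guarantees that the homogenised degree lies in the correct residue class modulo $q^s-1$).

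With the submodule structure in place I would verify the remaining hypotheses of Theorem~\ref{thm:2ntCodes}. The condition $\gcd(k,q-1)=1$ is assumed in (3) and (4), and is automatic in (1) and (2) since $q=2$ forces $k=1$; nontriviality of $\C$ follows from the bounds on $\ell$, which keep $\C$ a proper nonzero submodule. By Proposition~\ref{prop:2ntCodes}, $G_{\b 0}$ is transitive on $\Gamma_1({\b 0})$ and $\Gamma_2({\b 0})$, so Theorem~\ref{thm:2ntCodes} applies and $\C$ satisfies either part~(1) (covering radius $1$, with $\C$ the binary even-weight code or a perfect Hamming code) or part~(2) ($(G,2)$-neighbour-transitive). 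Since $G=T_\C\rtimes G_{\b 0}\leq\Aut(\C)$ and a $(G,2)$-neighbour-transitive code is $2$-neighbour-transitive, it remains only to rule out part~(1).

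Ruling out part~(1) is the decisive step. By Theorem~\ref{thm:2ntCodes}(1) the covering-radius-$1$ codes are exactly the binary even-weight code, with $\delta=2$, and the perfect Hamming codes, with $\delta=3$; the latter occur in line~1 only at degree $(t-1)(q-1)-1$ with $s=1$. The stated bounds keep $\C$ strictly below these thresholds, so $\C$ is neither exceptional code; equivalently, the minimum-distance results for the generalised and projective Reed--Muller codes and their subfield subcodes recorded in Section~\ref{sec:ReedMuller} give $\delta\geq4$ throughout (concretely $\delta=2^{t-\ell}\geq4$ in case (1), and the analogous bounds in cases (2)--(4)). In each case $\delta\geq4$, hence $\rho\geq2$, part~(1) cannot hold, and $\C$ is $(G,2)$-neighbour-transitive and therefore $2$-neighbour-transitive. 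I expect the main obstacle to be exactly this distance bookkeeping for the subfield subcodes in (2) and (4), whose minimum distances are less transparent than those of the codes treated directly by Proposition~\ref{prop:2ntCodes}, and so require the cited parameter formulas to pin down $\delta\geq4$.
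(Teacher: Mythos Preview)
Your plan is essentially the paper's argument, repackaged through Theorem~\ref{thm:2ntCodes}: both routes boil down to (i) transitivity of a suitable $G_{\b 0}$ on $\Gamma_1(\b 0)$ and $\Gamma_2(\b 0)$, (ii) $\delta\geq 4$ from the known Reed--Muller distance formulas, and (iii) the elimination in Proposition~\ref{prop:coveringRadius1}. The paper simply invokes (i)--(iii) directly rather than via Theorem~\ref{thm:2ntCodes}.

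The one place you diverge is in handling cases (1) and (2). You homogenise to force these into line~2 of Table~\ref{tab:2ntGroups}; the paper instead observes that since $q=2$ here, a weight-$1$ or weight-$2$ vertex is determined by its support, so the $2$-transitive action of $\AGL_t(2^s)\leq\Sym(\N)$ on $\N$ already gives transitivity on $\Gamma_1(\b 0)$ and $\Gamma_2(\b 0)$ with no polynomial bookkeeping. This is cleaner, and it sidesteps a small wrinkle in your homogenisation: the naive lift $F=x_0^{\ell}f(x_1/x_0,\dots,x_t/x_0)$ need not lie in $R(2,s,t{+}1,1)$ as stated, because $F(0,a_1,\dots,a_t)$ is the top-degree part of $f$ evaluated at $a$, which is not obviously $\F_2$-valued. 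This is repairable by working modulo the ideal $I$ of Lemma~\ref{lem:polyHammingBij} (every vertex has a representative in $R$), but note the paper's remark immediately preceding the proposition, which explicitly distinguishes the $\RM$ codes from the line-2 submodules via their different group actions. The direct $q=2$ argument avoids all of this.
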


\begin{proof}
 Since $\C$ is linear in all cases, the group of translations $T_\C$ acts transitively on $\C$. In (1) and (2), $\Aut(\C)_{\b 0}$ contains $\AGL_t(2)$ and $\AGL_t(2^s)$, respectively, so that $\Aut(\C)_{\b 0}$ acts transitively on each of the sets $\Gamma_1({\b 0})$ and $\Gamma_2({\b 0})$. For cases (3) and (4), Proposition~\ref{prop:2ntCodes} implies that $\Aut(\C)_{\b 0}$ acts transitively on each of $\Gamma_1({\b 0})$ and $\Gamma_2({\b 0})$. We claim that each code has minimum distance at least $4$, in which case the result follows from Proposition~\ref{prop:coveringRadius1}. To prove the claim, first observe that the minimum distance of $\RM_{2^s/2}(\ell,t)$ is bounded below by the minimum distance of $\RM_{2^s}(\ell,t)$, and the minimum distance of $\PRM_{q^s/q}(\ell,t)$ is bounded below by $\PRM_{q^s}(\ell,t)$. By \cite[Corollary~5.5.4]{assmus1994designs}, 
 $\RM_{2^s}(\ell,t)$ has minimum distance $(2^s-m)2^{s(t-r-1)}$, where $\ell=r(2^s-1)+m$ with $m\in\{0,\ldots,2^s-1\}$. Thus the result holds in cases (1) and (2). By \cite[Theorem~1]{sorensen1991projective}, 
 $\PRM_{q^s}(\ell,t)$ has minimum distance $(q^s-m)q^{s(t-r-1)}$, where $\ell-1=r(q^s-1)+m$ with $m\in\{0,\ldots,2^s-1\}$. Thus the result also holds in cases (3) and (4).
\end{proof}

\end{document}